\documentclass{article}
\usepackage{graphicx} % Required for inserting images^

\usepackage{color}

\usepackage{amssymb}
 \usepackage{amsmath}

\newtheorem{thm}{Theorem}

\newtheorem{lem}{Lemma}

\newtheorem{rem}{Remark}

\title{
Exact ensemble controllability for neural differential equations  
via neural interpolation 
}
\author{Martin Gugat}
\date{
Friedrich-Alexander-Universit\"at Erlangen-N\"urnberg (FAU),
Department of Mathematics, 
Lehrstuhl für Dynamics, Control, Machine Learning  and Numerics (Alexander von Humboldt-Professur), 
Cauerstr. 11, 91058 Erlangen, Germany (\texttt{martin.gugat@fau.de})}

\begin{document}

\maketitle

\begin{abstract}

    We study a system that is governed by neural dynamics.
    Neural dynamics are a model for deep neural networks with a large number of
    layers.
    For a differential equation where the right-hand side 
    %a nonlinear source term that  is defined by a concatenation of sigmoidal functions,
    is given by a neural network,
    %of depth $2$.
we analyze the exact ensemble  controllability  of
the system. This property plays an essential role in Machine Learning.
%(see \cite{zuazua2025machine}).
It concerns  the ability to learn  to perform 
different tasks  simultaneously with  a single neural dynamics:
Exact ensemble controllability 
requires that
$N$ different initial states are steered to corresponding $N$ target states
with a single set of control parameters.

We present a constructive solution to the problem.
The construction is based on the solution of a neural interpolation problem. 
 We show that if the right-hand side of the differential equation 
 is given by a neural network of depth two,
 the interpolation problem can be reduced to the solution of a system of linear equations.

\end{abstract}

\section{Introduction}
In \cite{zuazua2025machine}, it is stated that 
'\emph{control theory of dynamical systems offers a powerful framework for tackling challenges in deep neural networks and other machine learning architectures}.'
%In our 
To contribute to this on-going development  we consider a system that is governed by neural dynamics.
The system has the form
\begin{equation}
\label{ode}
    y'(t) =  L \, y(t) + F(t,\, y)
    \end{equation}
where $L$ is a 
%suitable 
linear operator.
%For example, for $L \, y = y_{xx}$ we obtain  dynamics similar to the heat equation
%and the case $L = y_x$ corresponds to dynamics with a transport equation. 
The nonlinear source term $F$ is defined by a concatenation of sigmoidal functions
in the sense of a neural network, 
therefore we can call our system a neural differential equation.
Neural differential equations have received much attention recently,
since they are a continuous model for
the dynamics in neural networks that
are essential in machine learning,
see for example \cite{chen2018neural}, \cite{richter2026generative}.
Neural ordinary differential equations (ODEs) in optimal control are also discussed in  \cite{geshkovski2022turnpike}. 
In \cite{li2026universal}, the approximation of
dynamical systems by neural ODEs is studied.

We consider the exact controllability properties of
the system. These properties play an
essential role in machine learning.
In particular we study exact ensemble properties,
which correspond to the ability to learn 
to perform 
different tasks 
simultaneously with 
a single neural dynamics,
see  for example \cite{zuazua2025machine}:
\emph{'A fundamental difference between this control-theoretic interpretation of supervised learning and classical
control problems lies in the fact that, in supervised learning, the same controls ...
%(W (t), A(t), b(t)) 
must
\textbf{simultaneously} drive the entire ensemble of input data to their corresponding outputs. This makes the problem
one of ensemble, or simultaneous, control, unlike the classical setting, where controls are typically tailored to each
specific initial state and target.'}

Exact (rather than approximate) controllability is 
important for the mathematical analysis of optimal control in machine learning
since it empowers arguments,  that use the 
exact control property to obtain tracking terms that are equal to zero
after finite time. 
In our contribution we provide exact controls that satisfy an a-priori
bound which also  allows to obtain a bound for the corresponding control cost.

In \cite{danhane2024conditions}, also obstruction to uniform ensemble controllability are investigated
for linear systems.
The relation with polynomial interpolation is stated.
Ensemble controllability by Lie algebraic methods
has been studied in  \cite{agrachev2016ensemble},
where
%mostly approximate ensemble controllability
it is shown that for finite ensembles the
exact controllability property is generic
under conditions on the Lie rank.
Neural dynamics are not considered in  \cite{agrachev2016ensemble}.
The approximate ensemble controllability
for the heat equation
has been investigated in \cite{danhane2024ensemble}.

In \cite{zuazua2025machine}, a result about the  approximate ensemble controllability
 for autonomous neural ordinary differential equations is stated (see also \cite{alvarez2024interplay}).
In \cite{9827563},
the authors show  ensemble controllability
for neural ordinary differential equations 
on a dense submanifold.

In our contribution we follow a different approach,
we consider the 
\emph{exact ensemble controllability}.
%
%With our approach,  
We obtain a result on the exact controllability on 
%towards 
desired trajectories.
This requires a careful choice of the desired trajectories
that ensures that they do not intersect.

%assumption on the minimal distance of
%the desired trajectories.

To be precise, let a depth $d_e\in\{1,2,3,\ldots\}$
be given and for $i \in \{1,\, \ldots d_e\}$ define 
\begin{equation}
\label{fidefinition}
F^{(i)}(t,y) = W^{(i)}(t) \,  \sigma ( A^{(i)}(t)\, y + b^{(i)}(t) ),  
\end{equation}
where $A^{(i)}$ and $W^{(i)}(t)$ are linear operators,
and $b^{(i)}$ are vectors.
The functions  $W^{(i)}(t)$ are assumed to be continuously differentiable.
The function  $\sigma$ is a sigmoidal activation function.
Let 
\begin{equation}
\label{fdefinition}
F(y) = F^{(d_e)}(F^{(d_e-1)}( \ldots (F^{(1)}(y) )  \ldots   )).
\end{equation}

In this contribution, to be specific,  we will focus on $\sigma(z)=\tanh(z)$.
The novel construction that  we present can also be implemented for other 
sigmoidal activation function
as used for example in \cite{costarelli2013approximation}.
%\cite{cybenko1989approximation}.
The construction depends on the assumption that we have a right-hand side that
has two layers. 
This allows us to construct localized basis functions for the right-hand sides that 
%are sums of terms 
have their mass concentrated around given points in 
$\mathbb{R}^d$.

Our result on exact ensemble controllability has the following
form (see Theorem \ref{thm3}  in Section \ref{sec:exact}):
Let a time horizon $T$ and $N$ different given initial states
\[y^{j }_0 \in \mathbb{R}^d, \; j \in \{1,\ldots, N\}\]
and $N$ different $N$  states
\[y^{desi, j }_T \in \mathbb{R}^d,, \; j \in \{1,\ldots, N\}\]
be given.
We will show that for the depth $d_e = 2$  there exist controls
$A^{(i)}$,  $b^{(i)}$ and $W^{(i)}(t)$ ($i \in \{1,2\}$) 
such that
for all $j \in \{1,\ldots, N\}$ the  
solution of (\ref{ode}) with the  initial state
$y^{j }_0$ is driven to the desired target state
$y^{desi, j }_T$ at the time $T$.

In order to prove the result,
in Section \ref{sec:neuralinterp}  we will first consider
the (static) problem of neural interpolation,
where the neural networks maps 
$N$ different given  states
\[z^{j }_0\in \mathbb{R}^d, \; j \in \{1,\ldots, N\}\]
exactly to
$N$ desired states
\[z^{desi, j }_T  \in \mathbb{R}^d , \; j \in \{1,\ldots, N\}, \]
that is  there exist parameters 
$A^{(i)}$,  $b^{(i)}$ and $W^{(i)}$ 
such that
for all $k \in \{1,\ldots, N\}$ 
we have the equations 
\begin{equation}
\label{interpolproblem}
F( z^{ k }_0  ) =  z^{desi,k}_T,  
\end{equation}
where $F$ is as in (\ref{fdefinition}),
'meaning that the 
%INN 
$\ldots$ function 
$\ldots$ 
%space 
exactly passes the interpolation points',
see \cite{park2025unifying}.
In our contribution,
in contrast to  \cite{park2025unifying}, 
we do not relax this condition
%in  the training of the neural network, 
but
keep it as an equality constraint.
Recently, in \cite{doi:10.1137/23M1599744},
%in contrast to our  approach,
the authors have used the term
\emph{approximate  universal interpolation property} (UIP)
to describe  simultaneous  approximation (up to an arbitrarily small error $\varepsilon >0$)
of $N$ data points by a suitably chosen function $F$.

The proof of the result on interpolation is based on
auxiliary  functions that are discussed in Section \ref{sec:auxil}.
We show how the neural networks yield  approximate $\delta$-sequences.
Note that the problem of neural interpolation is different from the problem  in
the well-known universal approximation theorem in \cite{cybenko1989approximation}.

%%%%%%%%%%%%%%%%%%%%%%%%%%%%%%%%%%%%%%%%%%%%%%%%%%%%%%%%%%%%%%%%%%%%%%%%%%%%%%%%%%%%%%%%%%%%%%%%%%%%
%Interpolating neural networks are also considered in \cite{park2025unifying}.
% THIS IS JUST A NAME; THEY DO NOT REALLY INTEPOLATE!
%  title={Unifying machine learning and interpolation theory via interpolating neural networks}
%

%%%%%%%%%%%%%%%%%%%%%%%%%%%%%%%%%%%%%%%%%%%%%%%%%%%%%%%%%%%%%%%%%%%%%%%%%%%%%%%%%%%%%%%%%%%%%%%%%%%%%%%%

%%%%%%HERE ID THE NEW TEXT:

For our interpolation result, we assume that $d_e=2$
and show that 
in this case the interpolation problem 
can be reduced to the solution of a linear system
with a regular $N\times N$ matrix.
%has a solution with a regular 
%outer matrix $W^{(2)}$.
%

%%%%%%%%%%%%%%%%%%%%%%%%%%%%%%%%%%%%%%%%%%%%%%%%%%%%%%%%%%%%%%%%%%%%%%%%%%%%%%%%%%%%%%%%%%%%%%%%%%%%%%%%%%%%

%%%%%%%%%%%%% REPLACE THE FOLLOWING SENTENCE IN THE INTRODUCTION:
%For our interpolation result, we assume that $d_e=2$
%and show that 
%in this case the interpolation problem has a solution with a regular 
%outer matrix $W^{(2)}$.
%%%%%%%%%%%%%%%%%%%%%%%%%%%%%%%%%%%%%%%%%%%%%%%%%%%%%%%%%%%%%%%%%%%%%%%%%%%%%%%%%%%%%%%%%%%%%%%%%%%%%%%%%

The following example  shows that for the depth $d_e=1$, 
also for small values of $N$ in general the interpolation problem does not have
a solution with a regular outer matrix $W^{(1)} $.

Let $N=3$.
Assume that 
$ z^{1}_0  = 0 \in \mathbb{R}^d$.
Let 
$ z^{2}_0 \in \mathbb{R}^d$,  $z^{ 2 }_0 \not =0$  be given.
Define  $ z^{3}_0  = -  z^{2}_0  $.
Let $z^{desi, 1 }_T = 0  \in \mathbb{R}^d$ and  let $z^{desi, 2 }_T\not= 0$ be given.
Assume that 
 $z^{desi, 3}_T \not =   -  z^{desi, 2 }_T$. 
For the depth $d_e=1$, we have
\[F(z) = W^{(1)} \,  \sigma ( A^{(1)}\, z + b^{(1)} ). \]
Suppose that the interpolation problem has a solution,
that is  for all $k \in \{1,\, 2, \, 3\}$, equation 
(\ref{interpolproblem}) holds.

The interpolation condition for $k=1$ implies that $F(0)=0$, hence 
   $ \sigma ( b^{(1)} )$  is in the kernel of $ W^{(1)} $.
Then we have
$z^{desi, 2 }_T = F(z^{2}_0  ) = W^{(1)} \,  \sigma ( A^{(1)}\,   z^{2}_0  + b^{(1)} )$
and
$z^{desi, 3 }_T = F(z^{3}_0  ) = W^{(1)} \,  \sigma (  - A^{(1)}\,   z^{2}_0  + b^{(1)} )$.
Our  assumption implies that 
$z^{desi, 2}_T  +  z^{desi, 3 }_T \not=0$, which  is  equivalent to the statement that 
$  \sigma ( A^{(1)}\,   z^{2}_0  + b^{(1)} ) +  \sigma (  - A^{(1)}\,   z^{2}_0  + b^{(1)} )  $
  is not in the kernel of $ W^{(1)} $.
Suppose that $  b^{(1)} =0 $.
Then 
 $\sigma ( A^{(1)}\,   z^{2}_0  + b^{(1)} ) +  \sigma (  - A^{(1)}\,   z^{2}_0  + b^{(1)} ) = 0$,
 which is a contradiction.
 Hence we have shown that  $  b^{(1)} \not =0 $.
 This implies that 
 $ \sigma ( b^{(1)} )\not =0$, and hence  
 the matrix  $W^{(1)} $
 has a kernel that is at least of dimension one.

The paper is organized as follows. 
In Section \ref{sec:auxil},  we introduce auxiliary functions based on the hyperbolic tangent 
and establish their approximation properties. This also  yields an approximate $\delta$-sequence result. 
Section \ref {sec:neuralinterp}  uses these properties to 
establish the solvability of the neural interpolation problem constructively, 
reducing it to the solution of a  linear system of equations.
In Section  \ref{sec:exact}, we apply this interpolation framework to prove the main result 
on exact ensemble controllability. 
Section \ref{sec:exa} illustrates the construction explicitly for an ensemble of two elements.
%%%%%%%%%%%%%%%%%%%%%%%%%%%%%%%%%%%%%%%%%%%%%%%%%%%%%%%%%%%%%%%%%%%%%%%%%%%%%%%%%%%%%%%%%%%%%%%%%%%%

\section{Auxiliary functions to approximate the Dirac delta distribution}
\label{sec:auxil}

First we introduce some auxiliary functions. 
Let a real number $h>0$ be given.
(As an example, take $h=\frac{1}{2}$.)
Define the function
\[  g_0(x) = \frac{\tanh( x + h ) - \tanh(x - h)}{ 2 \tanh(h)} .\]
Then we have
\begin{equation}
\label{2gl}
g_0(x) = \frac{1 + \cosh( 2\, h)}{ \cosh( 2 \, x) + \cosh( 2 \, h)}.
\end{equation}
We also use the notation $g_0(x,\, h)$ to emphasize the dependence on $h$.
Note that
\[\lim_{h\rightarrow 0+ } g_0(x, \, h) =  \frac{2}{ 1+ \cosh( 2 \, x)}.\]

We have the following lemma about the properties of $g_0$:
\begin{lem}
\label{lemma1}
For all $h>0$ we have  $g_0(h)\geq \frac{1}{2}$. 
For $d\geq 2$ and $h\geq \frac{d}{2}$ 
we have 
\begin{equation}
\label{g0dungleichung}
g_0 \left( \frac{h}{d} \right) \geq \frac{d}{d+1}.
\end{equation}
We have   
\begin{equation}
\label{g0abldungleichung}
|g_0'(x)|  \leq 2 \, g_0(x) \,  |\tanh(2\, x)|.
\end{equation}

{
\color{blue}
Moreover, in a neighborhood of zero,
i.e. for $x\in [-h/2, \, h/2]$,
we have
\[|g_0'(x) | \leq  4\, \exp( - h)
\;
\mbox{ and }
\;
g_0(x) \geq  1 - 2\, h \, \exp(-\, h). 
\]
}

\end{lem}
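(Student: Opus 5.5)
The plan is to derive everything from the closed form (\ref{2gl}),
\[
g_0(x)=\frac{1+\cosh(2h)}{\cosh(2x)+\cosh(2h)},
\]
which is even in $x$ and decreasing in $|x|$. Each claim then reduces to an elementary inequality for $\cosh$, $\sinh$ and $\exp$.

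For the first claim, I would insert $x=h$ to get $g_0(h)=\frac{1+\cosh(2h)}{2\cosh(2h)}\geq\frac12$. For (\ref{g0dungleichung}), I would clear denominators. The inequality $g_0(h/d)\geq \frac{d}{d+1}$ is equivalent to
\[
d+1+\cosh(2h)\geq d\cosh(2h/d).
\]
Set $u=2h/d$; the hypothesis $h\geq d/2$ gives $u\geq 1$. Since $d\geq 2$ and $u\geq 1$, we have $e^{(d-1)u}\geq e^{d-1}\geq d$. Hence $\cosh(du)\geq \tfrac12 e^{du}\geq \tfrac d2 e^{u}$. On the other side, $d\cosh u=\tfrac d2(e^u+e^{-u})\leq \tfrac d2 e^u+\tfrac d2$. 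Combining the two bounds gives $\cosh(du)+d+1\geq d\cosh u$, which is the claim.

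For (\ref{g0abldungleichung}), I would differentiate (\ref{2gl}):
\[
g_0'(x)=-\,g_0(x)\,\frac{2\sinh(2x)}{\cosh(2x)+\cosh(2h)} .
\]
Since $\cosh(2h)>0$, the factor is bounded by $|\sinh(2x)|/\cosh(2x)=|\tanh(2x)|$, and the inequality follows.

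For the local estimates on $[-h/2,h/2]$, I would use monotonicity in $|x|$ for both bounds. From the derivative formula, $|g_0'(x)|\leq \frac{2(1+\cosh 2h)\sinh h}{\cosh^2(2h)}$. Using $1+\cosh 2h\leq 2\cosh 2h$, this is at most $\frac{4\sinh h}{\cosh 2h}\leq 4\,\frac{e^h/2}{e^{2h}/2}=4e^{-h}$.

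For the lower bound, I would write
\[
g_0(x)=1-\frac{\cosh(2x)-1}{\cosh(2x)+\cosh(2h)}.
\]
The subtracted term is increasing in $|x|$, so for $|x|\leq h/2$ it is at most $\frac{\cosh h-1}{1+\cosh 2h}=\frac{\cosh h-1}{2\cosh^2 h}$. A crude bound here only gives $2e^{-h}$, and the factor $h$ in the statement forces a sharper argument for small $h$.

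This last step is where I expect the real difficulty. I would handle it with the mean value estimate $\cosh h-1=\int_0^h\sinh s\,ds\leq h\sinh h$. This yields the bound $\frac{h\tanh h}{2\cosh h}\leq \frac{h}{2\cosh h}\leq h e^{-h}$, using $\cosh h\geq e^h/2$. That is even better than the required $2h e^{-h}$, so the claim follows.
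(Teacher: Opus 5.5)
Your proof is correct. The first claim, (\ref{g0abldungleichung}), and the local derivative bound follow the paper essentially verbatim: explicit differentiation of (\ref{2gl}) plus crude exponential estimates. You argue differently in two places.

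\emph{The inequality (\ref{g0dungleichung}).} You clear denominators and prove $d+1+\cosh(du)\ge d\cosh u$ for $u\ge 1$ directly from $e^{(d-1)u}\ge e^{d-1}\ge d$, exploiting the additive slack $d+1$. The paper instead uses the monotone auxiliary functions $G(d)=\cosh d-d\cosh 1$ and $H(x)=\cosh x-d\cosh(x/d)$. With them it proves the stronger multiplicative inequality (\ref{hilfsungleichung}), $\cosh(x/d)\le\frac1d\cosh x$ for $x\ge d$, and then simply drops the $1$ in the numerator of $g_0$.

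Your route is shorter. The paper's intermediate inequality, however, is what it later cites in Lemma~\ref{lemma2}, part 2. There the sharper bound $g_0\ge\frac{d}{d+1}\bigl(1+\frac{1}{\cosh(2h_1)}\bigr)$ is needed to match the choice (\ref{h2defi2004}) of $h_2$, and (\ref{g0dungleichung}) alone would not give it. Your exponential argument adapts easily, though: $e^{(d-1)u}\ge e^{d-1}\ge d(1+e^{-2})\ge d(1+e^{-2u})$ for $d\ge 2$, $u\ge1$ gives $\cosh(du)\ge d\cosh u$.

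\emph{The lower bound $g_0(x)\ge 1-2h\exp(-h)$.} The paper just integrates its derivative bound starting from $g_0(0)=1$, giving $g_0(x)\ge 1-\frac{h}{2}\cdot 4e^{-h}$. So the step you flagged as the real difficulty is in fact immediate from the estimate $|g_0'|\le 4e^{-h}$ you had already proved. Your route via $1-g_0(x)=\frac{\cosh 2x-1}{\cosh 2x+\cosh 2h}$ and $\cosh h-1\le h\sinh h$ is longer, but it yields the sharper bound $g_0(x)\ge 1-h e^{-h}$.
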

\textbf{Proof:}
%This can be seen as follows:
First for $d\geq 2$ we consider 
the auxiliary function
\[G(d) = \cosh(d) - d\, \cosh(  1 ).\]
Then $G(2) = \cosh(2) - 2\, \cosh(1)  >0$
and  for $d\geq 2 $ we have $G'(d)  = \sinh(d) - \cosh(1) >0$.
Hence for all $d\geq 2$ we have  $G(d) >0$.

Now  for $ x \geq d$ define the auxiliary function
\[H(x) = \cosh(x ) - d\, \cosh \left( \frac{x}{d} \right).\]
Then we have $H( d ) = \cosh(d) - d\, \cosh( 1 ) = G(d) >0$.
Moreover, $H'(x) = \sinh(x) - \sinh \left( \frac{x}{d} \right)
 >0$.
Thus for all $x\geq d$ we have $H(x)>0$
and thus $ d \, \cosh( \frac{x}{d} ) <  \cosh(x)  $.
Hence for all $x \geq d$ we have 
\begin{equation}
\label{hilfsungleichung}
\cosh\left(\frac{x}{d} \right) \leq \frac{1}{d}  \cosh(x).  
\end{equation}
Hence  we have
\[  g_0\left( \frac{h}{d} \right) \geq  \frac{\cosh( 2\, h)}{ \cosh( 2 \, \frac{h}{d}  ) + \cosh( 2 \, h)}
\geq   \frac{\cosh( 2\, h)}{  \frac{1}{d} \cosh( 2 \, h  ) + \cosh( 2 \, h)}  = \frac{d}{d + 1} \]
and   (\ref{g0dungleichung})  follows.
%%%%%%%%%%%%%%%%%%%%%%%%%%%%%%%%%%%%%%%%%%%%%%%%%%%%%%%%%%%%%%%%%%%%%%%%%%%%%%%%%%%%%%%%%%%%%%%%%%%%%%%%%%%%%%%%%%%%%%%%%%%
For the derivative we have   
\[g_0'(x) = -2  \, \frac{1 + \cosh(2\, h)}{(\cosh(2x) + \cosh(2h))^2} \sinh(2x).\]

This yields the inequality
\[|g_0'(x) | \leq
2 \frac{1 + \cosh(2\, h)}{\cosh(2x) + \cosh(2h)} \, \frac{  | \sinh(2x)  | }{\cosh(2x) + \cosh(2h)}
\]
\[
\leq  2 \, g_0(x) \frac{  | \sinh(2x)  | }{\cosh(2x)} 
=  2 \, g_0(x) \, |\tanh(2\, x)|.
\]
Thus we have shown (\ref{g0abldungleichung}). 
%%%%%%%%%%%%%%%%%%%%%%%%%%%%%%%%%%%%%%%%%%%%%%%%%%%%%%%%%%%%%%%%%%%%%%%%%%%%%%%%%%%%%%%%%%%%
%{\textbf{Proof: }}
{\color{blue} 
For the derivative we also have   
\[|g_0'(x)| 
%= |2  \, \frac{1 + \cosh(2\, h)}{(\cosh(2x) + \cosh(2h))^2} \sinh(2x)|\]
%\[
= 
2  \, \frac{1 + \cosh(2\, h)}{\cosh(2x) + \cosh(2h)} \; \left| \frac{\sinh(2x)}{   \cosh(2x) + \cosh(2h) }  \right|
\]
\[=
2 \, g_0(x)  \;  \frac{|\sinh(2x)|}{   \cosh(2x) + \cosh(2h) }            
= 2  \, g_0(x) \,|\tanh(2 x)| \,  \frac{\cosh(2x)}{   \cosh(2x) + \cosh(2h) }     
\]
\[
= 2 \, g_0(x) \,|\tanh(2 x)| \,  \frac{1}{  1  + \frac{ \cosh(2h) }{\cosh(2 x)} }     
\]
This implies that for 
%$d\geq 2$ and  
$x\in [ - h/2, h/2]$ we have 
\[|g_0'(x)|
\leq 
2  \,|\tanh( h )| \,  \frac{1}{  1  + \frac{ \cosh(2h) }{\cosh (h) } }     
\leq
2  \frac{1}{  \frac{ \exp (2h) }{   2 \exp(h) } } 
\leq 4 \, \exp(-\, h).
\]
This implies in turn that for 
$x\in [ - h/2, h/2]$ we have 
\[g_0(x) \geq 1 -
\frac{h}{2}  \,  4 \, \exp(-\, h) = 1 - 2\, h \, \exp(-\, h)
\geq 1 -  2 \exp(-1).
\]
}

%%%%%%%%%%%%%%%%%%%%%%%%%%%%%%%%%%%%%%%%%%%%%%%%%%%%%%%%%%%%%%%%%%%%%%%%%%%%%%%%%%%%%%%
This finishes the proof of Lemma \ref{lemma1}. 
%$\Box$
\vspace*{1cm}
\\
Let  scaling factors $\gamma_1>0$, $\gamma_2>0$, $h_1>0$ and $h_2>0$  be given 
(As an example, take $\gamma_1 =\gamma_2 = 10$, $h_1 = h_2 = \frac{1}{2}$.)
For $y\in \mathbb{R}^d$ and  $y^{desi}$ in  $\mathbb{ R}^d $ define
\[F^{(1)}(y) = \sum_{j = 1}^d g_0( \gamma_1 (y_j -  y_j^{desi}) , \, h_1)  \]
and
\[F^{(2)}(y) = g_0 ( \gamma_2 (  F^{(1)}(y) - d), \, h_2).\]
For $d_e=2$ we have
\[ F(y, y^{desi} ) =  F^{(2)}(  F^{(1)}(y) ) \]
which can approximate a Dirac-delta function with support at the point $y^{desi}$.

We introduce the notation 
\begin{equation}
\label{g0definition}
G_0(y,    y^{desi}) =  F(y, y^{desi} )  =  F^{(2)}(  F^{(1)}(y) ). 
\end{equation}

For $z \in \mathbb{R}^d$ let 
$\|\cdot\|_\infty  = \max_{ j\in \{1\ldots, d \}   } \, |z_j|$ 
denote the maximum-norm in $\mathbb{R}^d$.
%with the following property:
%For all  $ j\in \{1\ldots, d \} $ we have 
%\[    |z_j|\leq \|z\|. \]

We have the following lemma about the properties of $G_0$:
\begin{lem}
\label{lemma2}
For all $y\in \mathbb{R}^d$ we have 
$G_0(  y  ,\,   y^{desi}) \geq 0$ and
$G_0(  y^{desi}  ,\,   y^{desi}) = 1$.

Let $\lambda \in (0, \, \infty) $ and $\Delta >0$  be given. 

Assume that   $ h_1 = \lambda\,  \Delta \, \gamma_1  \geq \frac{d}{2}$
and
\begin{equation}
    \label{h2defi2004}
    h_2 =   \frac{d}{d+1} \,    \left( 1 -    \frac{d}{ \cosh( 2\,   \lambda \,  \Delta \, \gamma_1 )} \right)  \, \gamma_2.
    \end{equation}

\begin{enumerate}
    \item 
     Let $\mu > 1$  be given.
Assume that 
$\gamma_1$ is sufficiently large such that 
%there exists a number $\kappa(\gamma_1) >0$ such that 
\begin{equation}
\label{kappadefinition}
\kappa(\gamma_1) = 
\frac{d+1}{d}  \frac{1}{ 1 -    \frac{d}{ \cosh( 2\,   \lambda \,   \Delta \gamma_1) } }
 \left[
 1 -  4 \exp( - 2 (\mu - 1) \, \lambda \,\Delta \, \gamma_1)
 \right]
-1 >0.
\end{equation}
Note that for $\lambda \in (0, \, \infty)$ we have 
\[
\lim_{\gamma_1 \rightarrow \infty}  \kappa(\gamma_1) =  \frac{1}{d}.
\]
    
%Let $\Delta >0$ be given such that 
%\begin{equation}
%\label{bed1}
%  2 \, \gamma \, {\Delta} \geq \textrm {ln}\left(  4 + 4\,  \cosh( 2\, h)  \right) .
%  \end{equation}
 If   $ \| y -   y^{desi}\|_\infty \geq \mu \, \lambda \, \Delta$,
  we have 
  
\begin{equation}
\label{ungleichun202501}
G_0(y,    y^{desi}) \leq  4 \, \exp\left( - 2 \, \kappa(\gamma_1) \, h_2 \right).
\end{equation}
  
%\begin{equation}
%\label{ungleichun202501}
%    G_0(y,    y^{desi}) \leq 
%\left(  2  + 2\,  \cosh( 2\, h) \right)  \, \exp\left(-  \gamma  \right).
%
%   G_0(y,    y^{desi}) \leq 
% 4  \, \exp\left(-2 ( 1 - \lambda)   \gamma  \right).
%\end{equation}
\item 
 If 
 $ \| y -   y^{desi}\|_\infty \leq  \frac{ \lambda  \, \Delta}{d}$,
  we have 
\begin{equation}
\label{ungleichun202502}
G_0(y,    y^{desi}) 
\geq
\frac{1}{2}.
\end{equation}
and thus
$1-  G_0(y,    y^{desi}) \leq  \frac{1}{2}$.
%
%\begin{equation}
%\label{ungleichun202502}
%   1-  G_0(y,    y^{desi}) \leq 
%   \frac{ \cosh\left( 4 \, d \,  \lambda \, \gamma^2\, \Delta  \,  \tanh(2\,  \gamma\,  \lambda \,  \Delta ) \right)   -1  }{  2 }
%\end{equation}
%and thus
%\[
%G_0(y,    y^{desi}) 
%\geq
%1 -    \frac{ \cosh\left( 4 \, d \,  \lambda \, \gamma^2\, \Delta  \,  \tanh(2\,  \gamma\,  \lambda \,  \Delta ) \right)   -1  }{  2 }
%\geq 1 -   4 \, d^2 \,  \lambda^2 \, \gamma^4\, \Delta^2  \,  \tanh^2(2\,  \gamma\,  \lambda \,  \Delta ) 
%\]
%Moreover, if  $ h \geq  \lambda \, \gamma \,  \Delta$,   we have 
%\[
%g_0( \gamma (y_j -  y_j^{desi}) ) \geq
%\frac{ \cosh( 2\,   \lambda \, \gamma \,  \Delta )}{ \cosh(2 \,  \gamma\, \lambda \,  \Delta) + \cosh( 2 \,\lambda \, \gamma \,  \Delta   )}
%= \frac{1}{2}.
%\]

%\item 
%We have   $|g_0'(x)|  \leq 2 \, g_0(x) \,  |\tanh(2\, x)|$.
% If  there exists $\lambda >0$ such that 
% $ \| y -   y^{desi}\| \leq   \lambda  \, \Delta$,
%  we have 
%\begin{equation}
%\label{ungleichun202503}
%   G_0'(y,    y^{desi}) \leq 
%{ \cosh\left( 4 \, d \,  \lambda \, \gamma^2\, \Delta  \,  \tanh(2\,  \gamma\,  \lambda \,  \Delta ) \right)   -1  }.
%\end{equation}

\end{enumerate}
\end{lem}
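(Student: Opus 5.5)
The plan is to work directly with the closed form (\ref{2gl}), $g_0(x,h)=\frac{1+\cosh(2h)}{\cosh(2x)+\cosh(2h)}$. It shows that $g_0$ is positive and even, is decreasing in $|x|$, and satisfies $g_0(0,h)=1$ and $g_0\le 1$.

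\textbf{Preliminary claims.} Each summand of $F^{(1)}$ lies in $(0,1]$, and $G_0=g_0(\gamma_2(F^{(1)}-d),h_2)>0$, which gives nonnegativity. At $y=y^{desi}$ every summand equals $1$, so $F^{(1)}=d$ and $G_0=g_0(0,h_2)=1$.

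Both estimates use the same reduction. Because $g_0(\cdot,h_2)$ is even and decreasing in $|x|$, any bound on $|F^{(1)}(y)-d|=d-F^{(1)}(y)$ becomes a bound on $G_0$: an upper bound on $d-F^{(1)}$ gives a lower bound on $G_0$, and a lower bound on $d-F^{(1)}$ gives an upper bound on $G_0$.

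\textbf{Part 1.} Pick a coordinate $j$ with $|y_j-y_j^{desi}|\ge\mu\lambda\Delta$. Its argument $x=\gamma_1|y_j-y_j^{desi}|$ satisfies $x\ge\mu h_1$. Using $\cosh(2\mu h_1)\ge \frac12 e^{2\mu h_1}$ and $1+\cosh(2h_1)\le 2e^{2h_1}$, we get
\[
g_0(x,h_1)\le \frac{1+\cosh(2h_1)}{\cosh(2\mu h_1)}\le 4\exp(-2(\mu-1)h_1).
\]
The remaining $d-1$ summands are at most $1$. Hence
\[
d-F^{(1)}(y)\ge 1-4\exp(-2(\mu-1)\lambda\Delta\gamma_1),
\]
and this quantity is positive because $\kappa(\gamma_1)>0$. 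Next we solve (\ref{h2defi2004}) for $\gamma_2$ and insert it into (\ref{kappadefinition}). This gives exactly
\[
\gamma_2\bigl[1-4\exp(-2(\mu-1)\lambda\Delta\gamma_1)\bigr]=(1+\kappa)h_2 .
\]
Monotonicity then yields $G_0\le g_0((1+\kappa)h_2,h_2)$. Bounding the numerator by $2e^{2h_2}$ and the denominator from below by $\cosh(2(1+\kappa)h_2)\ge\frac12 e^{2(1+\kappa)h_2}$ gives (\ref{ungleichun202501}).

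\textbf{Part 2.} Now every coordinate argument satisfies $|x_j|\le h_1/d$, so each summand is at least $g_0(h_1/d,h_1)$. Write $c=\cosh(2h_1/d)$ and $C=\cosh(2h_1)$. Then
\[
1-g_0(h_1/d,h_1)=\frac{c-1}{c+C}.
\]
For fixed $C$ this expression is increasing in $c$. For $d\ge2$, since $2h_1\ge d$, inequality (\ref{hilfsungleichung}) from the proof of Lemma \ref{lemma1} gives $c\le C/d$. Substituting $c=C/d$ yields
\[
\frac{c-1}{c+C}\le\frac{C/d-1}{C/d+C}=\frac{1-d/C}{d+1}.
\]
For $d=1$ we have $c=C$ and the same formula holds with equality. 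Therefore
\[
\gamma_2\bigl(d-F^{(1)}(y)\bigr)\le \gamma_2\,\frac{d}{d+1}\Bigl(1-\frac{d}{\cosh(2\lambda\Delta\gamma_1)}\Bigr)=h_2 .
\]
By monotonicity $G_0\ge g_0(h_2,h_2)\ge\frac12$, using the first statement of Lemma \ref{lemma1}.

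\textbf{Main obstacle.} The delicate step is the sharp estimate in Part 2. The crude bound $g_0(h_1/d)\ge \frac{d}{d+1}$ from Lemma \ref{lemma1} loses the factor $1-d/\cosh(2h_1)$ that is built into $h_2$. One must keep the exact expression for $1-g_0$ and use the monotonicity in $c$ together with (\ref{hilfsungleichung}) to recover it. In Part 1 the only care needed is the algebraic check that the choice of $h_2$ turns the argument into exactly $(1+\kappa)h_2$.
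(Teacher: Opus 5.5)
Your proposal is correct and follows essentially the same route as the paper. In Part 1 you bound the offending summand by $4e^{-2(\mu-1)h_1}$ and use the choice of $h_2$ to turn $\gamma_2\bigl[1-4e^{-2(\mu-1)h_1}\bigr]$ into $(1+\kappa)h_2$. In Part 2 your monotonicity-in-$c$ argument combined with (\ref{hilfsungleichung}) is the paper's step of replacing $\cosh(2h_1/d)/\cosh(2h_1)$ by $1/d$ while keeping the factor $1+1/\cosh(2h_1)$; this gives $d-F^{(1)}(y)\le h_2/\gamma_2$ and hence $G_0\ge g_0(h_2,h_2)\ge\frac12$. You also use the statement's definition (\ref{kappadefinition}) of $\kappa$ rather than the mistyped version with $\cosh(2(\mu-1)\lambda\gamma_1\Delta)$ inside the paper's proof, and you treat $d=1$ separately, where (\ref{hilfsungleichung}) was not proved.
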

\textbf{Proof:}

%%%%%%%%%%%%%%%%%%%%%%%%%%%%%%%%%%%%%%%%%%%%%%%%%%%%%%%%%%%%%%%%%%%%%%%%%%%%%%%%%%%%%%%%%%%%%%%%%%%%%%%%%%%%%%%%%%%%%%%%%%%
\emph{1.}
If   $ \| y -   y^{desi}\|_\infty \geq \mu\, \lambda \,\Delta$,
there exists an index  $j \in \{1, \ldots, N\}$ such that
$  |y_j -  y_j^{desi}|\geq \mu \, \lambda \, \Delta $, and hence 
we have
\[
g_0( \gamma_1 (y_j -  y_j^{desi}) , \, h_1) \leq 
g_0(\, \gamma_1 \,\mu \, \lambda \,\Delta, \, h_1) 
=
\frac{1 + \cosh( 2\, h_1)}{ \cosh(  2 \, \mu \, \lambda   \, \gamma_1\,\Delta) + \cosh( 2 \, h_1)}
\]
\[
\leq
 \frac{2  + 2\,  \cosh( 2\, h_1)}{ \exp( 2\, \mu \,   \lambda  \, \gamma_1 \,\Delta)}
 =
 \left(  2  + 2\,  \cosh( 2\, h_1) \right)  \, \exp(- 2 \, \mu  \,  \lambda \, \gamma_1\,\Delta).
\]
%\[
%=
% \frac{1 + \cosh( 2\, h)}{ 2 \cosh( \gamma\,\Delta + h) \, \cosh(\gamma\,\Delta - h   )}.
%\]
Hence we have
\[ d -  \sum_{k = 1}^d g_0( \gamma_1 (y_k -  y_k^{desi}), \, h_1 )
%\geq  1 - \left(  2  + 2\,  \cosh( 2\, h_1)  \right) \, \exp(- 2 \, \gamma_1\,\Delta).
\geq  1 -  4 \exp( - 2 (\mu - 1)  \, \lambda  \, \gamma_1 \, \Delta). 
\]
This implies in turn the inequality
\[G_0(y,    y^{desi}) \leq 
 g_0 ( \gamma_2 ( d -  \sum_{k = 1}^d g_0(\gamma_1 (y_k -  y_k^{desi}) )), \, h_2)
 \]
 \[
 \leq \frac{  1 +   \cosh( 2\, h_2) }{\cosh\left(2 \, \gamma_2 \,   
 \left[  1 -  4 \exp( - 2 (\mu - 1)  \,  \lambda  \, \gamma_1 \, \Delta) \right] 
 \right) +  \cosh (2 \, h_2) }.
 \]
%%%%%%%%%%%%%%%%%%%%%%%%%%%%%%%%%%%%%%%%%%%%%%%%%%%%%%%%%%%%%%%%%%%%%%%%%%%%%%%%%%%%%%%%%%%%%%%%%%%%%%%%
%  \[
% \leq \frac{  1 +   \cosh( 2\, h_2) }{\cosh\left(2 \, \gamma_2 \,   \left[1 - \left(  2  + 2\,  \cosh( 2\, h_1)  \right) \, \exp(- 2 \, %\gamma_1\,\Delta) \right]      \right) +  \cosh (h_2) }.
%\]
%%%%%%%%%%%%%%%%%%%%%%%%%%%%%%%%%%%%%%%%%%%%%%%%%%%%%%%%%%%%%%%%%%%%%%%%%%%%%%%%%%%%%%%%%%%%%%%%%%%%%%%%
With $h_2$ as in (\ref{h2defi2004}), i.e. 
\[h_2 =  \gamma_2 \, \frac{d}{d+1} \   \left( 1 -    \frac{d}{ \cosh( 2\,   \lambda \, \gamma_1 \,  \Delta )} \right),\]
this implies that
%\[G_0(y,    y^{desi}) 
%\leq 
% g_0 ( \gamma_2 ( d -  \sum_{j = 1}^d g_0(\gamma_1 (y_j -  y_j^{desi}) )))
% \]
 \[
 G_0(y,    y^{desi}) \leq \frac{ 2\,  \exp(2\, h_2)  }{ \cosh\left(2 \, 
 \frac{d+1}{d}  \frac{1}{ 1 -    \frac{d}{ \cosh( 2\,   \lambda \, \gamma_1 \,  \Delta ) } }
 \, h_2
 \,   
 \left[
 1 -  4 \exp( - 2 \,(\mu - 1) \lambda  \, \gamma_1 \, \Delta)
 \right]
 \right)  }.
 \]
Define the  number $\kappa$ as
\[
\kappa = 
\frac{d+1}{d}  \frac{1}{ 1 -    \frac{d}{ \cosh( 2\, (\mu - 1)    \lambda \, \gamma_1 \,  \Delta ) } }
 \left[
 1 -  4 \exp( - 2 (\mu - 1) \, \lambda  \, \gamma_1 \, \Delta)
 \right]
-1.
\]
If $\kappa >0$, this  implies that
\[G_0(y,    y^{desi}) \leq  4 \, \exp\left( - 2 \, \kappa \, h_2 \right).
\]
Thus we have shown (\ref{ungleichun202501}).

\emph{2.}  Assume that   $ \| y -   y^{desi}\|_\infty \leq   \frac{\lambda  \, \Delta}{d}$.
Since  $ h_1 = \lambda \, \gamma_1 \,  \Delta\geq \frac{d}{2}$,   for all $j \in \{1, \ldots, N\}$    we have
\[
g_0( \gamma_1 (y_j -  y_j^{desi}), \, h_1 ) \geq
\frac{  1+ \cosh( 2\,   \lambda \, \gamma_1 \,  \Delta )}{ \cosh(2 \,  \frac{1}{d} \, \gamma_1\, \lambda \,  \Delta) + \cosh( 2 \,\lambda \, \gamma_1 \,  \Delta   )}
= \frac{ 1 + \frac{1}{ \cosh( 2\,   \lambda \, \gamma_1 \,  \Delta ) }   }{  1  +  \frac{   \cosh(2 \,  \frac{1}{d} \, \gamma_1\, \lambda \,  \Delta) }{ \cosh( 2 \,\lambda \, \gamma_1 \,  \Delta   ) }     }.
\]
Thus
%(\ref{g0dungleichung})
(\ref{hilfsungleichung}) 
implies 
\[
g_0( \gamma_1 (y_j -  y_j^{desi}), \, h_1 )
\geq
 \frac{ 1 + \frac{1}{ \cosh( 2\,   \lambda \, \gamma_1 \,  \Delta ) }   }{  1  +   \frac{1}{d}       }
 =
 \frac{d}{d+1}
\left(    1 + \frac{1}{ \cosh( 2\,   \lambda \, \gamma_1 \,  \Delta ) }     \right).
\]

%If $ h_1 = \lambda \,  \,  \Delta$,   for all $j \in \{1, \ldots, N\}$    we have
%\[
%g_0( \gamma (y_j -  y_j^{desi}) ) \geq
%\frac{  1+ \cosh( 2\,   \lambda \,  \Delta )}{  \gamma\, \cosh(2 \,  \lambda \,  \Delta) + \cosh( 2 \,\lambda \,   \Delta   )}
%\geq  \frac{ 1 + \frac{1}{   \gamma \, \cosh( 2\,   \lambda \, \gamma \,  \Delta ) }   }{2}.
%\]

Hence  we have
\[ d -  \sum_{j = 1}^d g_0( \gamma_1 \,  (y_j -  y_j^{desi}), \, h_1 )
%\leq   \frac{1 -    \frac{1}{ \cosh( 2\,   \lambda \, \gamma \,  \Delta ) }  }{2} \, d
\leq \frac{d}{d+1}  \left( 1 -  \frac{d}{ \cosh( 2\,   \lambda \, \gamma_1 \,  \Delta ) } \right).
\]
This implies in turn the inequality
\[G_0(y,    y^{desi}) \geq 
 g_0 \left(  \gamma_2\,  \frac{d}{d+1}  \left( 1 -  \frac{d}{ \cosh( 2\,   \lambda \, \gamma_1 \,  \Delta ) } \right) , \, h_2  \right).
 \]
With $h_2$ as in definition  (\ref{h2defi2004}), that is 
\[
h_2 =  \gamma_2 \, \frac{d}{d+1} \   \left( 1 -    \frac{d}{ \cosh( 2\,   \lambda \, \gamma_1 \,  \Delta )} \right), 
\]
using Lemma \ref{lemma1} we obtain 
 \[
G_0(y,    y^{desi})  \geq g_0 \left(  h_2, \, h_2 \right) 
\geq 
\frac{1}{2},
\]
hence (\ref{ungleichun202502}) holds.
This finishes the proof of Lemma \ref{lemma2}.

%we obtain
%\[  |\frac{d}{dy} G_0(y,    y^{desi})| \leq  |\sinh(2\,  )|
%%
%\]

%Figure \ref{fig1}  shows the function $F$ for $d = 2$, $ d_e=2$, $h_1 = h_2 =  \tfrac{1}{2}$, $ \gamma_1 = \gamma_2 = 10$
%and $y^{desi} = ( 0, \, 0)^\top$.
%Note that 
%$F(y^{desi})=1$ and
%for $y\not=  y^{desi}$, 
%the value of $F$ decays rapidly.

%Figure  \ref{fig1alt}   shows the function $F$ for $d = 2$, $ d_e=2$, $h_1 =    \tfrac{1}{2} $,  
%$h_2 = 6$,  $\gamma_1 = \gamma_2 = 10$
%and $y^{desi} = ( 0, \, 0)^\top$.
%In this case in a neighborhood  of $y^{desi}$, the graph is very flat. 

%\begin{figure}[htbp]

%		\includegraphics[width=\textwidth]{bilbaobil1untitled.png}
%		\caption{An example for $F(y) $ with $d = 2$,  $\gamma_1 = \gamma_2 = 10$ and $h_1 = h_2 =\frac{1}{2}$.  }
%		\label{fig1}
	
%\end{figure}

%\begin{figure}[htbp]
%		\includegraphics[width=\textwidth]{bilbaobild2.png}
%		\caption{An example for $F(y) $ with $d = 2$, $\gamma_1= \gamma_2=  10$,  
 %         $h_1  =\frac{1}{2}$
 %       and $h_2=6$.}
%		\label{fig1alt}	
%\end{figure}

\begin{rem}
\label{remark1}
%\textbf{Remark:} 
The functions $G_0$ depend on the parameters
$\gamma_1$, $\gamma_2$, $h_1$ and $h_2$.
We introduce the notation 
$G_0(x, y,  \gamma_1, \,  h_1, \,  \gamma_2, \,  h_2 ) $
to emphasize the dependence of $G_0$ on the parameters.
The parametric family 
$ \left(G_0(\cdot, \, \cdot, \,  \gamma_1, \gamma_2, h_1, h_2)\right)_{ \gamma_1, \gamma_2, h_1, h_2 }$
can be used to approximate $\delta$-functions,
that is to obtain an approximate $\delta$-sequence
on the set 
\[B_M(y)  =\{x \in \mathbb{R}^d: \|x - y\|_\infty \leq M\},\]
where $M>0 $ is a real number.  This is shown in the following subsection.
\end{rem}

\subsection{An approximation result}
Let a real number $M>0$ be given.
We can use a normalized $\tilde G_0$ with 
 \[   \tilde G_0(x, \, y ) = \frac{1}{\int_{B_M(y)} G_0(z, \, y) \, dz } \, G_0(x, y),\]
 (thus $\int_{B_M(y)} \tilde G_0(x,  \, y) \, dx =1$)
as a convolution kernel for an approximate $\delta$-sequence
%approximation
in the following sense:

\begin{thm}
    \label{convolution}
Let $K \subset \mathbb{R}^d$ be compact and  a continuous function 
\[f:  K \rightarrow (-\infty,\, \infty)\]
be given. 
Choose $M>0$ sufficiently large.

There exists a sequence of parameters
$(\gamma_1^{(n)}, \,  h_1^{(n)}, \,  \gamma_2^{(n)}, \,  h_2^{(n)})_{n=1}^\infty$
with
$\gamma_1^{(n)}>0$,   $h_1^{(n)}>0$,  $\gamma_2^{(n)}>0$,   $h_2^{(n)}>0$ 
such that 
%$h_1 = \lambda \, \gamma_1 \,  \Delta$  
%and
% $h_2$ as defined in  (\ref{h2defi2004})  we have 
\[
%\lim_{\gamma_1 \rightarrow \infty,  \, \gamma_2\rightarrow \infty }  
\lim_{n\rightarrow \infty}
\max_{y\in K} \left| f(y)  -   \int_{B_M(y)} f(x) \,   \tilde G_0\left(x, \, y, \,\gamma_1^{(n)}, \,  h_1^{(n)}, \,  \gamma_2^{(n)}, \,  h_2^{(n)}    \right) \, dx \right| = 0.
\]
\end{thm}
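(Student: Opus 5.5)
We follow the standard argument for approximate identities. There are two issues to handle: $f$ is only defined on $K$, and the kernel is normalized over $B_M(y)$.

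\emph{Setup.} By Tietze's theorem we extend $f$ to a continuous function on $\mathbb{R}^d$. For the integral over $B_M(y)$ to make sense we only need $f$ defined on $K_M=\{x:\operatorname{dist}_\infty(x,K)\le M\}$, which is compact, so $f$ is uniformly continuous and bounded there, say $|f|\le C_f$. (Alternatively, "$M$ sufficiently large" can be read as a choice of the domain on which the extension lives.)

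Fix $\lambda=1$ and $\mu=2$, and let $\Delta_n\downarrow 0$ (for instance $\Delta_n=1/n$). Choose $\gamma_1^{(n)}$ large enough that three conditions hold:
\begin{itemize}
\item $h_1^{(n)}=\lambda\Delta_n\gamma_1^{(n)}\ge d/2$;
\item $\kappa(\gamma_1^{(n)})\ge \frac{1}{2d}$, which is possible since $\kappa\to 1/d$;
\item $\Delta_n\gamma_1^{(n)}\to\infty$.
\end{itemize}
Then define $h_2^{(n)}$ by (\ref{h2defi2004}) and choose $\gamma_2^{(n)}$ large (to be fixed below). Since $h_2$ is proportional to $\gamma_2$ with a factor close to $d/(d+1)$, $h_2^{(n)}\to\infty$ as $\gamma_2^{(n)}\to\infty$.

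\emph{Splitting the error.} Since $\int_{B_M(y)}\tilde G_0(x,y)\,dx=1$,
\[
f(y)-\int_{B_M(y)} f(x)\tilde G_0(x,y)\,dx=\int_{B_M(y)}(f(y)-f(x))\tilde G_0(x,y)\,dx .
\]
Given $\varepsilon>0$, pick $\eta>0$ with $|f(x)-f(y)|\le\varepsilon$ whenever $\|x-y\|_\infty\le\eta$ and $x,y\in K_M$. Split $B_M(y)$ into the near part $\|x-y\|_\infty<\mu\lambda\Delta_n$ and the far part. For $n$ large, $\mu\lambda\Delta_n\le\eta$, so the near part contributes at most $\varepsilon$, because $\tilde G_0\ge 0$ (Lemma \ref{lemma2}) and its integral over $B_M(y)$ is $1$.

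\emph{The far part.} This is the main step. By Lemma \ref{lemma2}(1), on the far part
\[
G_0\le 4\exp(-2\kappa h_2)\le 4\exp(-h_2/d).
\]
Hence the numerator of the far contribution is at most $2C_f(2M)^d\cdot 4e^{-h_2^{(n)}/d}$. For the normalization, Lemma \ref{lemma2}(2) gives $G_0\ge\frac12$ on the ball $\|x-y\|_\infty\le\lambda\Delta_n/d$, which lies inside $B_M(y)$ for small $\Delta_n$. Therefore
\[
\int_{B_M(y)}G_0\,dz\ \ge\ \tfrac12\,(2\Delta_n/d)^d .
\]
The far contribution is then bounded by
\[
C\,\Delta_n^{-d}\,e^{-h_2^{(n)}/d},
\]
uniformly in $y\in K$. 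The point is that the exponential smallness of $G_0$ off the $\Delta_n$-ball must beat the polynomial smallness $\Delta_n^d$ of the normalization. We arrange this by choosing $\gamma_2^{(n)}$, after $\Delta_n$ and $\gamma_1^{(n)}$ have been fixed, so large that
\[
h_2^{(n)}\ \ge\ d\bigl(n+d\,|\ln\Delta_n|\bigr).
\]
This is possible since the factor multiplying $\gamma_2$ in (\ref{h2defi2004}) is positive once $\cosh(2\lambda\Delta_n\gamma_1^{(n)})>d$, which holds because $\Delta_n\gamma_1^{(n)}\to\infty$.

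\emph{Conclusion.} The far part tends to $0$ uniformly in $y\in K$, and the near part is at most $\varepsilon$ for all large $n$. Letting $\varepsilon\to0$ yields the claimed uniform convergence. All bounds are uniform in $y$ because Lemma \ref{lemma2} is translation invariant.
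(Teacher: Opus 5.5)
Your argument is correct and follows the paper's skeleton. Both proofs use nonnegativity of $\tilde G_0$, a lower bound $\tfrac12(2\Delta_n/d)^d$ on the normalization from Lemma~\ref{lemma2}(2), an exponentially small tail that beats this polynomial factor, and a near/far split controlled by uniform continuity of the extended $f$.

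The substantive difference is where you place the cut. You take the far region to be $\|x-y\|_\infty\ge\mu\lambda\Delta_n$ with $\mu=2$, so Lemma~\ref{lemma2}(1) applies as stated. You then choose $\gamma_2^{(n)}$ only after $\Delta_n$ and $\gamma_1^{(n)}$ are fixed, so that $h_2^{(n)}\ge d(n+d|\ln\Delta_n|)$.

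The paper instead fixes $\gamma_1^{(n)}=n^2$, $h_1^{(n)}=n$, $\gamma_2^{(n)}=n$ and cuts at $\|x-y\|_\infty\ge 1/n=h_1^{(n)}/\gamma_1^{(n)}$, which is effectively $\mu=1$. At that radius its intermediate claim $g_0(\gamma_1^{(n)}(x_j-y_j),h_1^{(n)})\le 4e^{-n}$ fails, since $g_0(n,n)=\frac{1+\cosh(2n)}{2\cosh(2n)}>\frac12$. The step written there as $1+\cosh(h_1^{(n)})\le 2e^{h_1^{(n)}}$ must involve $\cosh(2h_1^{(n)})$, and with that correction one only obtains $g_0\le 4$.

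Your margin $\mu>1$ is exactly what makes the inner layer exponentially small on the far set. So your version closes the tail estimate that the paper's computation, as written, does not. The only cost is that your parameter sequence is less explicit.
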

\begin{rem}
Theorem \ref{convolution} is closely related to  the
well-known universal 
uniform approximation property, see \cite{cybenko1989approximation}.
Multivariate hyperbolic tangent neural network approximation
(not interpolation) has also been considered in \cite{anastassiou2011multivariate}.
%Note that for more regular functions $f$, an improved  order of approximation 
%can be shown using the smoothness of $f$, see \cite{anastassiou2011multivariate}, Theorem 8.
\end{rem}
\textbf{Proof:}
The proof uses standard arguments.
For the convenience of the reader we include it here. 
We show that $\tilde G_0$ yields an
approximate $\delta$-sequence,
since  we have the following three properties:
\begin{enumerate} 
\item 
For all $x$, $y\in \mathbb{R}^d$ we have 
$\tilde G_0(x, \, y) \geq 0$.

\item 
Let 
$ \lambda  \,  \Delta  := \frac{h_1}{\gamma_1} 
\geq \frac{d}{2 \,\gamma_1}$ and 
$h_2$ be defined as in (\ref{h2defi2004}), that is
\begin{equation}
   % \label{h2defi2004}
    h_2 =   \frac{d}{d+1} \,    \left( 1 -    \frac{d}{ \cosh( 2\,  h_1 )} \right)  \, \gamma_2.
    \end{equation}

Then
due to Lemma \ref{lemma2}, 2. we have
\[( 2\,M) ^d \geq \int_{B_M(y) } G_0( x, \, y) \, dx \geq
\int_{\| y - y_d\|_\infty \leq \tfrac{h_1}{\gamma_1 \, d} }
G_0(x, \, y) \, dx
\geq 
\frac{1}{2} \, 
\left(2 \frac{ h_1}{  \gamma_1\,d} \right)^d.
\]

\item 
For all $n \in \{d, d+1, d+ 2, \ldots\}$
there exist 
$\gamma_1^{(n)}>0$,   $h_1^{(n)}>0$,  $\gamma_2^{(n)}>0$,   $h_2^{(n)}>0$ 
%$\varepsilon >0$ we have 
such that
\begin{equation}
 \label{gtildeprop}
\lim\limits_{n \rightarrow \infty}  
\int\limits_{x\in B_M(y): \, \|x - y\|_\infty \geq  \frac{1}{n} }   \tilde G_0(x, \, y, \, 
\gamma_1^{(n)}, \,  h_1^{(n)}, \,  \gamma_2^{(n)}, \,  h_2^{(n)}   ) \, dx  =0.
%\lim_{\gamma_1 \rightarrow \infty, h_1 \leq  \gamma_1\, \varepsilon,  \, h_2\rightarrow \infty,  \gamma_2  > \frac{d+1}{d} \, h_2 }  
%\int\limits_{y\in B_M: \, \|y - y_{desi}\|_\infty \geq   \varepsilon}   \tilde G_0(y) \, dy  =0.
\end{equation}
\end{enumerate}
Property 3.  can be seen as follows. 
%For $n \in \{1,2,3,\ldots\}$, define 
%\[
%\Omega_n = \{ y \in B_M 
%\mathbb{R}^d
%:
%\|y - y_{desi}\|_\infty \in [  \lambda\, \Delta \, n,   \,   \lambda\, \Delta \, (n+1)) \}
%.
%\]
%
%Then 
%
%
%We have
%\[\int_{\Omega_n}    1 \, dy \leq 
%  (\lambda\, \Delta \, (n+1))^d -  ( \lambda\, \Delta \, n )^d
%  \leq
%    [\lambda\, \Delta \, (n+1)]^d.
%\]
%$
%\lim_{\gamma_1 \rightarrow \infty} 
%\int_{\|y - y_{desi}\|_\infty \geq  \mu \, \lambda \, \Delta  }  
% 4 \, \exp\left( - 2 \, \kappa(\gamma_1) \, h_2 \right).
%$
%
%This can be seen as follows.
Let $n \in \{d, d+1, d+ 2, \ldots\}$
%$\varepsilon >0$
be given.
Define 
%$\lambda \, \Delta = \frac{d}{2}$,
$\gamma_1^{(n)} = n^2$,
 $h_1^{(n)} =   n$, 
$\gamma_2^{(n)} = n$ and
 \[h_2^{(n)} =   \frac{d}{d+1} \,    \left( 1 -    \frac{d}{ \cosh( 2\,  h_1^{(n)} )} \right)  \, \gamma_2^{(n)}.\]

%\varepsilon$.

If   $ \| x -   y\|_\infty \geq \frac{1}{n}   $,
there exists an index  $j \in \{1, \ldots, d\}$ such that
$  |x_j -  y_j|\geq \tfrac{1}{n}    $, and  we have
\[
g_0( \gamma_1^{(n)} (x_j -  y_j) , \, h_1^{(n)}) 
=
\frac{1 + \cosh( 2\, h_1^{(n)})}{ \cosh(  2 \, \gamma_1^{(n)} (x_j -  y_j)   ) + \cosh( 2 \, h_1^{(n)})}
\]
\[
\leq
 \left(  2  + 2\,  \cosh( 2\, h_1^{(n)}) \right)  \, \exp(- 2 \, \gamma_1^{(n)} \,   \tfrac{1}{n}   ) .
\]
%\[
%=
% \frac{1 + \cosh( 2\, h)}{ 2 \cosh( \gamma\,\Delta + h) \, \cosh(\gamma\,\Delta - h   )}.
%\]
%If $h_1 \leq  \gamma_1\, \varepsilon$, 
We have
\[ 1 + \cosh(h_1^{(n)}) \leq 2 \exp(h_1^{(n)})  \leq 2 \exp(   \gamma_1^{(n)} \,   \tfrac{1}{n} ),\]
and obtain
\[
g_0(  \gamma_1^{(n)} \,  (x_j -  y_j) , \, h_1^{(n)})  \leq 4 \exp( -  \gamma_1^{(n)} \,   \tfrac{1}{n} ).
\]
This implies 
\[ d -  \sum_{k = 1}^d g_0( \gamma_1^{(n)}  (x_k -  y_k), \, h_1^{(n)}  )
%\geq  1 - \left(  2  + 2\,  \cosh( 2\, h_1)  \right) \, \exp(- 2 \, \gamma_1\,\Delta).
\geq  1 -  4 \exp( -  \gamma_1^{(n)} \,   \tfrac{1}{n} )  . 
\]
This yields  in turn the inequality
\[G_0(x,    y) \leq 
 g_0 ( \gamma_2^{(n)}  ( d -  \sum_{k = 1}^d g_0(\gamma_1^{(n)}  (x_k -  y_k) )), \, h_2^{(n)} )
 \]
 \[
 \leq \frac{  1 +   \cosh( 2\, h_2^{(n)} ) }{\cosh\left(2 \, \gamma_2^{(n)}  \,   
 \left[   1 -  4 \exp( -  \gamma_1^{(n)}  \,  \tfrac{1}{n}    )   \right] 
 \right) +  \cosh (2\, h_2^{(n)} ) }.
 \]
We have
\[
\int_{B_M(y) }
%{y \in B_M: \|y - y_{desi}\|_\infty \geq  \lambda\, \Delta }  G_0(y) \, dy 
1\, dy 
\leq 
%\sum_{n=1}^M \int_{\Omega_n}    G_0(y) \, dy.
(2M)^d.
\]
 
 Let $  \tilde\varepsilon >0 $ be given such that
 \[  \frac{d}{d+1}  \left( 1 + \tilde\varepsilon \right) < 1. \]
If 
$n$
%$\gamma_1^{(n)} $ 
is sufficiently large we have 
$  1 -  4 \exp( -  \gamma_1^{(n)}  \, \tfrac{1}{n} )  
> \frac{d}{d+1}  \left( 1 + \tilde \varepsilon \right)$ and  thus 
\[
\int_{x \in B_M(y) : \| x - y\|_\infty \geq  \tfrac{1}{n} }  G_0(x, \, y) \, dx
\leq
%\sum_{n=1}^\infty
\frac{  [1 +   \cosh( 2\, h_2^{(n)}   ) ]\, 
(2M)^d
}{\cosh\left(  2\, \frac{d}{d+1}\left( 1 + \tilde\varepsilon \right)\,  \gamma_2^{(n)}  
 \right) +  \cosh (2\, h_2^{(n)} ) }.
\]
%\[
%\leq
%M^d
%\frac{ 1 +   \cosh( 2\, h_2)  }{\cosh\left( 2\, \frac{d}{d+1} \, \gamma_2 
% \right) +  \cosh (2\, h_2) }
%\]
Since 
\[ \gamma_2^{(n)}   > \frac{d+1 }{d} \, h_2^{(n)} , \]
this yields
\[
\int\limits_{x \in B_M(y) : \|x - y\|_\infty \geq  \tfrac{1}{n} }  G_0(x, \, y) \, dx  
%\leq
%M^d
%\frac{ 1 +   \cosh( 2\, h_2^{(n)}   )  }{\cosh\left( 2\, \frac{d}{d+1} \, \gamma_2 
% \right) +  \cosh (2 \, h_2^{(n)} ) }
%\]
%\[
\leq
(2M)^d
\frac{ 1 +   \cosh( 2\, h_2^{(n)} )  }{\cosh\left( 2\, \left( 1 +\tilde \varepsilon \right)\, h_2^{(n)}  
 \right) +  \cosh (2 \, h_2^{(n)} ) }
\]
which converges to zero for $n  \rightarrow \infty $.
%for $h_2 \rightarrow \infty$.
%
With property 2, we obtain the equation 
%\begin{equation}
\[
\lim_{n \rightarrow \infty}  
\int\limits_{x\in B_M(y): \, \|x - y\|_\infty \geq   \tfrac{1}{n}}  \tilde G_0(x, \, y) \, dx  
%\end{equation}
%\[
\leq
\lim_{n \rightarrow \infty} 
  \frac{(2M)^d
  \frac{ 1 +   \cosh( 2\, h_2^{(n)} )  }{\cosh\left( 2\, \left( 1 +\tilde \varepsilon \right)\, h_2^{(n)}  
 \right) +  \cosh (2 \, h_2^{(n)} ) }
 }{
 \frac{1}{2} \, 
\left(\frac{ 2 }{ n\,d} \right)^d
}
=
0
\]
%\end{equation}
and thus we have shown (\ref{gtildeprop}).

%\begin{equation}
%\lim_{\gamma_1 \rightarrow \infty, h_1 \leq  \gamma_1\, \varepsilon,  \, h_2\rightarrow \infty,  \gamma_2  > \frac{d+1}{d} \, h_2 }  
%\int\limits_{y\in B_M: \, \|y - y_{desi}\|_\infty \geq   \varepsilon}  \tilde G_0(y) \, dy  =0.
%\end{equation}

Choose $M$ sufficiently large such that
$K \subset B_{M/2}$.
Extend the function $f$ to a continuous function that is defined on
$B_M(y)$ for all $y \in K$.
For $n \in \{1, \, 2,  \, 3, \, \ldots \}$ 
define
\[\bar U(n)  = \sup_{x, y \in K,\,  \|x-y\|_\infty  \leq \tfrac{1}{n} } |f(x) -  f(y)| <\infty.  \]
%If $\delta$ is sufficiently small,
Then we have
$\lim_{n \rightarrow \infty} \bar U( n ) = 0$.
For all $y\in K$
and  $n \in \{d,\, d+1, \, d+ 2,   \, \ldots \}$ 
we have 
\[ \left|  f(y)  -   \int_{B_M(y)}  f(x) \, \tilde G_0\left(x, \, y, \,\gamma_1^{(n)}, \,  h_1^{(n)}, \,  \gamma_2^{(n)}, \,  h_2^{(n)}    \right) \, dx \right|
\]
\[
=
\left|\int_{B_M(y)} [f(y)  -     f(x)]\, \tilde G_0\left(x, \, y, \,\gamma_1^{(n)}, \,  h_1^{(n)}, \,  \gamma_2^{(n)}, \,  h_2^{(n)}    \right) \, dx \right|
\]
\[
\leq
\left| \int_{B_M(y): \|x-y\|_\infty\leq \tfrac{1}{n} } \tilde G_0(x, \, y ) [ f(y) -  f(x) ]\, dx 
\right|
+
\left| \int_{B_M(y): \|x-y\|_\infty\geq \tfrac{1}{n}} \tilde G_0( x, \, y ) [ f(y) -  f(x) ]\, dx \right|
\]
\[
\leq
\bar U(n)
+
2 \max_{x\in B_M(y)} |f(x)|   \int_{B_M(y): \|x-y\|_\infty \geq  \tfrac{1}{n} } \tilde G_0( y ) \,  dy 
\]
\[
\leq
\bar U(n)
+
2
\max_{\bar y \in K}
\max_{x\in B_M(\bar y)} |f(x)|    \frac{(2M)^d
  \frac{ 1 +   \cosh( 2\, h_2^{(n)} )  }{\cosh\left( 2\, \left( 1 +\tilde \varepsilon \right)\, h_2^{(n)}  
 \right) +  \cosh (2 \, h_2^{(n)} ) }
 }{
 \frac{1}{2} \, 
\left(\frac{ 2 }{ n\,d} \right)^d
}
.
\]
By choosing $n$ sufficiently large,  we can make the first term
arbitrarily small.
%By (\ref{gtildeprop}),
Since also 
%the integral in 
the second term converges to zero 
for
%$\gamma_1$ and $\gamma_2$ are chosen
$n\rightarrow\infty$, the assertion follows.
%This yields the assertion. 

%\end{rem}

%%%%%%%%%%%%%%%%%%%%%%%%%%%%%%%%%%%%%%%%%%%%%%%%%%%%%%%%%%%%%%%%%%%%%%%%%%%%%%%%%%%%%%%%%%

\section{Neural Interpolation}

\label{sec:neuralinterp}
Now we present a constructive solution of the problem of neural interpolation
that is based upon the discussion in Section \ref{sec:auxil}.
We reduce the interpolation problem to the solution of
a system of linear equations.
Using Lemma \ref{lemma2}, we show that the corresponding matrix
is positive definite.

The problem is the following:
Let $N$ 
%desired data 
points $z^{1}, \, \ldots z^{N} \in \mathbb{R}^d$
and
$N$ 
desired data values $z^{desi,1}, \, \ldots z^{desi,N} \in \mathbb{R}^d$
be given.
The task is to find $w^1, \ldots, w^N \in \mathbb{R}^d$ such that  in
the neural map $F: {\mathbb R}^d \rightarrow  {\mathbb R}^d$, 
\[F: \, z\mapsto \sum_{j=1}^N G_0(z, \, z^{j }) \, w^j \]
%such that 
for all $k \in \{1, \ldots, N\}$ we have
the equation
\[F( z^{k}) =  z^{desi,k}.  \]

The following Theorem states that the interpolation problem
can be solved.
\begin{thm}
\label{thm2}
Assume that 
\[\Delta = \min_{j,k \in \{1, \ldots, N\}, \, j\not=k}  \| z^j - z^k\|_\infty>0.\]
Let $\lambda \in (0, \, 1)$ be given and 
let $h_1$ and $h_2$ be defined as in Lemma \ref{lemma2}.

Let $ \varepsilon \in (0,\, 1) $ be given. 
Let $\gamma_1$ and $\gamma_2$ be chosen sufficiently large 
such that for $\kappa(\gamma_1)$
as defined in (\ref{kappadefinition}) we have 
$  \kappa(\gamma_1)>0$  and
\begin{equation}
\label{gershgorinvosaussetzung}
4 \, (N-1)  \, \exp\left( - 2 \, \kappa(\gamma_1)  \, 
 \frac{d}{d+1} \,   \left( 1 -    \frac{d}{ \cosh( 2\,   \lambda \,   \Delta \, \gamma_1)} \right) \, \gamma_2 
\right) < 1 - \varepsilon.
\end{equation}
Then the problem of neural interpolation has a solution,
that is there exist
$w^1, \ldots, w^N \in \mathbb{R}^d$ such that 
for all $k \in \{1, \ldots, N\}$ we have
\begin{equation}
\label{lagrange}
\sum_{j=1}^N G_0(z^k, \, z^{j }) \, w^j  =   z^{desi,k}.
\end{equation}
Moreover, for any
norm on the finite-dimensional space
that contains the parameters 
$(W,  h_1,  h_2,  \gamma_1, \gamma_2)$ 
there exists a constant $C_N$ such that
we can find  a solution 
of the interpolation problem  
that
satisfies the  a priori bound
\begin{equation}
    \label{apriori}
\|(W, \,  h_1,  h_2,  \gamma_1,
\gamma_2)
\|  \leq  C_N ( 1 +  \max_{j\in \{1,\ldots, N\}} \| z^{   desi, \, j}  \|_\infty ).
\end{equation}
Here $W$ is the matrix with the columns $w^1,\, \ldots, w^n$.
\end{thm}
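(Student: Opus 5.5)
The plan is to write the interpolation conditions (\ref{lagrange}) as a matrix equation and show that the $N\times N$ matrix is strictly diagonally dominant. Define $G\in\mathbb{R}^{N\times N}$ by $G_{kj}=G_0(z^k,\,z^j)$, and let $Z$ be the $d\times N$ matrix with columns $z^{desi,k}$. Then (\ref{lagrange}) for all $k$ is equivalent to $W\,G^\top = Z$, i.e. to $G\,W^\top=Z^\top$. This is $d$ decoupled linear systems with the same matrix $G$, one for each coordinate of the $w^j$. So it suffices to show that $G$ is regular; we then set $W^\top=G^{-1}Z^\top$.

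For the diagonal, Lemma \ref{lemma2} gives $G_{kk}=G_0(z^k,z^k)=1$. For the off-diagonal entries with $j\neq k$, we have $\|z^k-z^j\|_\infty\geq \Delta$. Since $\lambda\in(0,1)$, we may choose $\mu=1/\lambda>1$, so that $\|z^k-z^j\|_\infty\geq\mu\lambda\Delta$. The hypothesis $\kappa(\gamma_1)>0$ with this $\mu$, together with part 1 of Lemma \ref{lemma2}, then gives
\[0\le G_{kj}\le 4\exp(-2\kappa(\gamma_1)h_2).\]
Inserting the definition (\ref{h2defi2004}) of $h_2$, we obtain
\[\sum_{j\neq k}|G_{kj}|\le 4(N-1)\exp(-2\kappa(\gamma_1)h_2)<1-\varepsilon\]
by (\ref{gershgorinvosaussetzung}). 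By Gershgorin's theorem every eigenvalue of $G$ lies in a disc around $1$ of radius less than $1-\varepsilon$, so $G$ is regular. (Diagonal dominance by the margin $\varepsilon$ is all we need; symmetry is not required.) Moreover, the row-sum estimate bounds the inverse uniformly. Writing $G=I-E$ with $\|E\|_{\infty\to\infty}<1-\varepsilon$, the Neumann series yields $\|G^{-1}\|_{\infty\to\infty}\le 1/\varepsilon$.

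For the a priori bound (\ref{apriori}), we fix the parameters in a way that depends only on $N$, $d$, $\lambda$, $\Delta$ and $\varepsilon$, not on the data $z^{desi,k}$. First pick $\gamma_1$ as the smallest admissible value (say on a fixed grid) such that $\lambda\Delta\gamma_1\geq d/2$ and $\kappa(\gamma_1)>0$; this is possible because $\kappa(\gamma_1)\to 1/d>0$. Then pick $\gamma_2$ so that (\ref{gershgorinvosaussetzung}) holds. This is possible because $h_2$ grows linearly in $\gamma_2$, with a positive factor once $\cosh(2\lambda\Delta\gamma_1)>d$, which is ensured by enlarging $\gamma_1$ if needed. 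Set $h_1=\lambda\Delta\gamma_1$ and define $h_2$ by (\ref{h2defi2004}). These four numbers are bounded by a constant depending only on the fixed quantities. For $W$, the Neumann-series bound gives $\max_j\|w^j\|_\infty\le \varepsilon^{-1}\max_k\|z^{desi,k}\|_\infty$. Finally, equivalence of norms on the finite-dimensional parameter space gives (\ref{apriori}), with $C_N$ absorbing $\varepsilon^{-1}$, the norm-equivalence constants and the parameter sizes.

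The main obstacle is matching the hypotheses of Lemma \ref{lemma2}, part 1, to the separation $\Delta$. The lemma requires a factor $\mu>1$ with $\|y-y^{desi}\|_\infty\ge\mu\lambda\Delta$, and this is exactly why $\lambda<1$ is assumed, so that $\mu=1/\lambda$ works. Some care is also needed because the proof of Lemma \ref{lemma2} writes $\kappa$ with $\cosh(2(\mu-1)\lambda\Delta\gamma_1)$ instead of $\cosh(2\lambda\Delta\gamma_1)$. I would use (\ref{kappadefinition}) as stated and verify the exponential bound directly from the displayed estimate for $G_0$, if necessary enlarging $\gamma_1$ so that both versions of $\kappa$ are positive.
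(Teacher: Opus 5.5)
Your proposal is correct and follows the paper's proof essentially step by step. The shared steps are: the componentwise linear system with matrix $P=(G_0(z^k,z^j))_{k,j=1}^N$, unit diagonal, the off-diagonal bound from Lemma~\ref{lemma2} with $\mu=1/\lambda$, strict diagonal dominance with margin $\varepsilon$ from (\ref{gershgorinvosaussetzung}), a choice of $(h_1,h_2,\gamma_1,\gamma_2)$ independent of the data $z^{desi,k}$, and equivalence of norms. The differences are minor: you bound $\|P^{-1}\|_{\infty\to\infty}\le 1/\varepsilon$ by a Neumann series, which avoids symmetry, whereas the paper uses symmetry and Gershgorin to show the smallest eigenvalue is at least $\varepsilon$; and on your caveat, the $\kappa$ in (\ref{kappadefinition}) is exactly what the derivation in Lemma~\ref{lemma2} gives after inserting (\ref{h2defi2004}), so the $(\mu-1)$ inside the $\cosh$ in that proof is a typo and no extra enlargement of $\gamma_1$ is needed.
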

\textbf{Proof:}
The interpolation problem is equivalent to a system of linear equations
for each of the components of the vectors $w^1, \ldots, w^N \in \mathbb{R}^d$.
The quadratic matrix $N\times N$-matrix of these linear systems is
\[P=   \left( G_0( z^k, z^j  )  \right)_{k,j=1 }^N. \]
Since 
$G_0(z^k, \, z^{k })=1$,  the diagonal elements of the matrix $P$ 
are equal to $1$. 

Since $\lambda \in (0, \, 1)$, we can choose 
$\mu = \frac{1}{\lambda} >1$.
For $k\not=j$, 
we have
$
 \| z^j - z^k\|_\infty \geq \Delta  = \mu\, \lambda \,\Delta.
 $
Hence 
due to (\ref{ungleichun202501}) 
we have the inequality
\[
G_0(z^k, \, z^{j })  \leq 4 \,  \exp\left( - 2 \, \kappa(\gamma_1)  \, 
 \frac{d}{d+1} \,   \left( 1 -    \frac{d}{ \cosh( 2\,   \lambda \, \gamma_1 \,  \Delta )} \right) \, \gamma_2 
\right).
\]
Thus due to  (\ref{gershgorinvosaussetzung}),
we can show that  for all $k\in \{1, \ldots, N\}$ we have 
\[ G_0(z^k, \, z^{k }) - \sum_{j\not=k } G_0(z^k, \, z^{j }) > 1 - (1 - \varepsilon) = \varepsilon   , \]
that is the  matrix $P$ is strictly  diagonally dominant,
and hence invertible.
Therefore, there exist $w^1, \ldots, w^N \in \mathbb{R}^d$ that solve the interpolation problem.
Let $W$ denote the $d\times N$ matrix with the columns $w^1, \ldots, w^N$. 

The matrix $P$ is symmetric and  positive definite.
By the Gershgorin circle theorem (see e.g. \cite{varga2011gervsgorin})
the smallest eigenvalue of $P$ is greater than or equal to $\varepsilon$.
Hence for any choice of a norm of the 
finite dimensional space that contains the matrix $W$, 
there exists a constant $\tilde p>0$ 
that does not depend on $W$ or the desired states $z^{desi, \, j}$
such that  we have
%$\|P^{-1}\| \leq \tilde p \, \frac{1}{\varepsilon}$.
%This implies
\[\| W \| \leq  \tilde p \, \frac{1}{\varepsilon}  \max_{j\in \{1,\ldots, N\}}  \| z^{desi, \, j} \|_\infty . \]

The assumptions on 
$h_1$, $h_2$, $\gamma_1$ and $\gamma_2$ imply 
the existence of a constant $\hat C_N>0$ such that we have
$\|(h_1,  h_2,  \gamma_1, \gamma_2 ) \|  \leq  \hat C_N$.
Thus there exist a constant $\hat p$ such that we have 
$$\|(W,  h_1,  h_2,  \gamma_1,
\gamma_2
\|
\leq \hat p
(\|(h_1,  h_2,  \gamma_1, \gamma_2) \| + \|W\|)
\leq
\hat p \left(  \hat C_N +  \frac{ \tilde p}{\varepsilon}   \max_{j\in \{1,\ldots, N\}}  \,\| z^{desi, \, j}  \|_\infty \right),
$$
and
(\ref{apriori}) follows.
This finishes the proof of Theorem \ref{thm2}.

%%%%%%%%%%%%%%%%%%%%%%%%%%%%%%%%%%%%%%%%%%%%%%%%%%%%%%%%%%%%%%%%%%%%%%%%%%%%%%%%%%%%%%%%%%%%%%%%%%%%%%%%%%%%%%%%%

%%%%%%%%%%%%%%%%%%%%%%%%%%%%%%%%%%%%%%%%%%%%%%%%%%%%%%%%%%%%%%%%%%%%%%%%%%%%%%%%%%%%%%%%%%%%%%%%%%%%%%%%%

\begin{rem}
\label{remark2}
The proof of (\ref{ungleichun202501})  shows that  If   $ \| y -   y^{desi}\|_\infty \geq \mu \, \lambda \, \Delta$, we have 
\[
 G_0(y,    y^{desi})
 \leq \frac{  1 +   \cosh( 2\, h_2) }{\cosh\left(2 \, \gamma_2 \,   
 \left[  1 -  4 \exp( - 2 (\mu - 1)  \,  \lambda  \, \gamma_1 \, \Delta) \right] 
 \right) +  \cosh (2 \, h_2) }.
 \]
 If
$\left[  1 -  4 \exp( - 2 (\mu - 1)  \,  \lambda  \, \gamma_1 \, \Delta) \right] >0$,
 this implies that asymptotically,  for $\gamma_2\rightarrow \infty$, 
%and $\gamma_2\rightarrow \infty$,
we have
\[
w^k(\gamma_2) \rightarrow  z^{desi,k}.
\]
This follows 
%from (\ref{ungleichun202501}), 
since
asymptotically, the non-diagonal elements 
 of the matrix $P$  converge to zero
 exponentially fast 
while the diagonal elements are equal to $1$.
\end{rem}

\section{Exact ensemble control}

\label{sec:exact}

We show the exact ensemble controllability by
constructing a time-varying feedback law
of the type that has been studied already
%be {J,-M. Coron} (see e.g. 
in \cite{CORON1995159}.
%).
%and {\sc Emmanuel Tr\'elat}.
%
The controls in our system are
in particular the matrices $W^{(i)}(t)$,
whereas we consider
$\gamma_1$
and $\gamma_2$
(and the corresponding values of 
$h_1$ and $h_2$ as defined in Lemma \ref{lemma2})
as parameters
that are chosen depending on
$d$, $N$ and $ \lambda \, \Delta$, but not on the realization  of the desired trajectories.
%$z^{desi}$

Let a time horizon $T$ and  $N$ desired trajectories
\[y^{desi, j }(t), \; t\in [0, T], \; j \in \{1,\ldots, N\}\]
be given.
Our aim is to find control parameters such that
our system tracks the $N$ 
desired trajectories 
$y^{desi, j }$
simultaneously.
%in a stable way.
%
In particular, the $N$ initial states
are steered exactly to the $N$ terminal states
at the time $T$.

To make this is  possible 
we assume that the trajectories
are chosen in such a way that they do not  have  intersection points
in the interior of $[0, \, T)$.
Note that for $d\geq 3$, in $\mathbb{R}^d$ 
generically, trajectories do not intersect
and such a choice is possible.

If there are common target states,
the time-reversability of the system is lost 
but also in this case we can 
track the trajectories in
such a way that they unify at the 
time $T$.
%
%
%
%see 
%\cite{dahinden2026intersectionsprojectedhamiltonianorbits}.
%Lucas Dahinden (Utrecht)
%We go on to prove that generically in dimension greater than 2, chords between two points do not have self-intersections at all, generalizing a theorem by Rademacher.
%This is joint work with Jacobus de Pooter.
%On the intersections of projected Hamiltonian orbits in cotangent bundles
%
%
 Now we state our main result:
 \begin{thm}
 \label{thm3}
 Let $d \geq 3$.
Assume that $N$ different 
initial states 
$y^{desi, j }(0)$
and
$N$ different
target states
$y^{desi, j }(T)$
are given.
%desired trajectories 
%$y^{desi, j }$ do not intersect in $[0, \, T]$.
 
Then the  system governed by the neural ODE 
(\ref{ode}) with $ F$ as in  (\ref{fdefinition}),
depth $d_e\geq 2$ and  $F^{(i)}$ as in
(\ref{fidefinition})
is exactly ensemble controllable
if in $F^{(2)}$ 
the matrix $W^{(2)}$ has $2N$ columns 
%has width $2N$ 
and  in $F^{(1)}$ 
the matrix $W^{(1)}$ has $2d$ columns.
%has width $2d$.
To be precise, we have the following assertion: 
%in the following sense:

There exist controls
$(W(t), \, h_1, \, h_2, \, \gamma_1, \,\gamma_2)$
(where $W(t) = (W^{(1)},  W^{(2)}(t))   $ 
and  $ W^{(2)}(t) $  is a  continuously differentiable  function) 
such that 
for all $j\in \{1,\ldots, N\}$ the initial states $y^{desi, j }(0)$
are steered to the target states $y^{desi, j }(T)$
at the time $T$.

 For any
norm on the finite-dimensional space
that contains the parameters 
$(W(t) ,  h_1,  h_2,  \gamma_1, \gamma_2)$ 
there exists a constant $C_N$ such that
we can chose the controls in such a way that
%of the interpolation problem  that satisfies the  a priori bound
%Moreover,  the control functions can be chosen in such a way that 
for all $t\in [0, \, T]$ 
they satisfy the  a-priori bound
\begin{equation}
    \label{aprioristeuerbar}
\|(W(t),  h_1,  h_2,  \gamma_1, \gamma_2)
\|  \leq  C_N ( 1 + 
%\max_{t\in [0, \, T]}  
\max_{j\in \{1, \ldots, N\} } \| y^{   desi, j}(t)  \|_{ \color{red} C^1([0, T])}
%\infty 
),
\end{equation}
where 
$ y^{   desi, j}(t) $ are suitably chosen
continuously differentiable 
desired trajectories that are pairwise disjoint.

%The system has the solutions
%$y^{desi, j }(t)$,
%$t\in [0, \, T]$,
%$j \in \{1, \ldots, N\}$.

Note that the controls
$W^{(1)}$, $A^{(i)}$   and  $b^{(i)}$ 
($i \in \{1,\, 2\}$) 
in (\ref{fidefinition}) 
can be chosen in such a way that they do not depend on the time $t$.

 \end{thm}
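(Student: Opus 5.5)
We construct a time-varying feedback law in the spirit of \cite{CORON1995159}, so that each prescribed trajectory $y^{desi,j}$ is an exact solution of (\ref{ode}). The first step is to choose the desired trajectories. Since $d\geq 3$, we pick $C^1$ curves $y^{desi,j}:[0,T]\to\mathbb{R}^d$ that connect $y^{desi,j}(0)$ to $y^{desi,j}(T)$. We take them pairwise disjoint, for instance as generic perturbations of straight segments; generically two curves in $\mathbb{R}^d$ with $d\geq 3$ do not meet. By compactness of $[0,T]$ and continuity,
\[
\Delta := \min_{t\in[0,T]}\,\min_{j\neq k}\|y^{desi,j}(t)-y^{desi,k}(t)\|_\infty>0 .
\]
This $\Delta$ is uniform in time. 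We fix $\lambda\in(0,1)$, $h_1=\lambda\Delta\gamma_1$ and $h_2$ as in (\ref{h2defi2004}). We then choose $\gamma_1,\gamma_2$ large, depending only on $d$, $N$ and $\lambda\Delta$, so that $\kappa(\gamma_1)>0$ and (\ref{gershgorinvosaussetzung}) hold with a fixed $\varepsilon$.

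The second step realizes the network. The first layer uses constant $A^{(1)}=\gamma_1 I$, two biases per coordinate, and a constant $W^{(1)}$ with $2d$ columns. Because $g_0(x,h)$ is a difference of two $\tanh$ terms divided by $2\tanh h$, this layer produces the scalar $\sum_{i}g_0(\gamma_1(y_i-c_i),h_1)$. The center $c$ enters only through the bias, and the bias is allowed to vary per neuron.

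This creates a problem: the centers $y^{desi,j}(t)$ move with $t$, yet $A^{(i)},b^{(i)}$ must be time-independent. I would resolve it by making the network act on the deviation variable. Write $y=y^{desi,j}(t)+e$. The feedback is designed so that along each desired trajectory the right-hand side matches $\dot y^{desi,j}(t)-L y^{desi,j}(t)$. Only the values at the $N$ points $y^{desi,k}(t)$ matter, and an inexact match elsewhere is harmless. One option is to place the kernels at fixed centers $y^{desi,j}(t_0)$ on short time intervals. Alternatively, $W^{(1)}$ in $\mathbb{R}^{d\times 2d}$ could play the role of the $2d$ first-layer outputs, with the second layer forming $2N$ $\tanh$ units that pair into $N$ bumps $g_0(\gamma_2(\cdot-d),h_2)$, weighted by the time-dependent $W^{(2)}(t)\in\mathbb{R}^{d\times 2N}$. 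I expect this step to be the main obstacle: keeping $A,b$ constant while the interpolation nodes move. The first thing I would try is the piecewise-in-time approach with fixed nodes. On a short interval $[t_m,t_{m+1}]$ the nodes $y^{desi,j}(t_m)$ stay $\Delta/2$-separated from the other trajectories, and Lemma~\ref{lemma2} still gives diagonal dominance with $\Delta/2$ in place of $\Delta$.

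The third step is interpolation in time. At each $t$ we apply Theorem~\ref{thm2} with $z^k=y^{desi,k}(t)$ (or the frozen nodes) and data
\[
z^{desi,k}(t)=\dot y^{desi,k}(t)-L\,y^{desi,k}(t).
\]
This gives $W^{(2)}(t)=Z(t)P(t)^{-1}$. The matrix $P(t)$ is symmetric, strictly diagonally dominant with margin $\varepsilon$, and $C^1$ in $t$; hence $W^{(2)}$ is $C^1$ once the trajectories are $C^2$, or after a mollification that preserves the data at the nodes. Then each $y^{desi,j}$ solves (\ref{ode}) with $y(0)=y^{desi,j}(0)$. The right-hand side is locally Lipschitz in $y$, since it is smooth in $y$ and continuous in $t$, so uniqueness shows that the solution starting at $y^{desi,j}(0)$ is exactly $y^{desi,j}$ and reaches $y^{desi,j}(T)$.

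Finally, the a priori bound (\ref{aprioristeuerbar}) follows as in Theorem~\ref{thm2}. The Gershgorin bound $\lambda_{\min}(P(t))\geq\varepsilon$ gives $\|P(t)^{-1}\|\leq \tilde p/\varepsilon$ uniformly in $t$. Also $\|Z(t)\|\lesssim \|y^{desi}\|_{C^1}(1+\|L\|)$, and $\gamma_i,h_i$ are fixed constants depending only on $d$, $N$ and $\lambda\Delta$, which yields the constant $C_N$.
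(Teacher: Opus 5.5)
Your Steps 1, 3 and 4 are the paper's proof: uniformly separated trajectories, Theorem~\ref{thm2} applied at each $t$ with nodes $y^{desi,k}(t)$ and data $(y^{desi,k})'(t)-Ly^{desi,k}(t)$, and the Gershgorin bound $\lambda_{\min}(P(t))\ge\varepsilon$ for the uniform estimate. Here you are slightly more careful than the paper. The paper asserts $C^1$ controls for merely $C^1$ trajectories, although the data contain $(y^{desi,k})'$. Simply choose the trajectories $C^2$ from the start (mollifying $W^{(2)}$ itself would break exact interpolation), and keep your explicit uniqueness argument.

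The genuine gap is the obstacle you flag in Step~2, and neither of your fixes removes it. With frozen nodes $c^j_m=y^{desi,j}(t_m)$, the centres sit in the first-layer biases $-\gamma_1 c^j_{m,i}\pm h_1$, so $b^{(1)}$ becomes piecewise constant rather than constant. The interpolation matrix $(G_0(y^{desi,k}(t),c^j_m))_{k,j}$ jumps at every $t_m$, so $W^{(2)}$ jumps there too. You therefore lose the $C^1$ regularity the statement asserts, without gaining time-independence. That matrix is also nonsymmetric, with diagonal entries only $\ge\tfrac12$ by Lemma~\ref{lemma2}. So (\ref{gershgorinvosaussetzung}) and the eigenvalue argument would have to be replaced by $4(N-1)e^{-2\kappa h_2}<\tfrac12-\varepsilon$ and a diagonal-dominance bound for the inverse.

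Your second option fixes $A^{(i)}$, $b^{(i)}$ and $W^{(1)}$, hence $F(t,y)=W^{(2)}(t)\Phi(y)$ with a fixed feature map $\Phi:\mathbb{R}^d\to\mathbb{R}^{2N}$. Interpolating at the moving nodes then requires the $2N\times N$ matrix $(\Phi(y^{desi,1}(t)),\dots,\Phi(y^{desi,N}(t)))$ to have full column rank, with a left inverse bounded uniformly in $t$. Neither Lemma~\ref{lemma2} nor Theorem~\ref{thm2} gives this, because both rely on bumps centred at the nodes themselves. Note also that each bump $G_0(\cdot,c)$ consumes $2d$ first-layer $\tanh$ units, so $N$ bumps need $2dN$ of them, not $2d$.

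The paper's proof does not close this gap either. As its Remark after Theorem~\ref{thm3} shows, its feedback is $F(t,y)=\sum_j G_0(y,y^{desi,j}(t))\,w^j(t)$ with moving centres, so its $b^{(1)}$ depends on $t$ and its first layer has $2dN$ units. The claims that $b^{(1)}$ can be taken time-independent and that $W^{(1)}$ has $2d$ columns are not supported by that construction.

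What your moving-node argument and the paper's actually prove is exact ensemble controllability with constant $A^{(i)}$, $W^{(1)}$ and $b^{(2)}$, a $C^1$ bias $b^{(1)}(t)$, a $C^1$ matrix $W^{(2)}(t)$ (for $C^2$ trajectories), and the bound (\ref{aprioristeuerbar}). State the result in that form and drop the frozen-node detour, unless you find a genuinely new mechanism for constant biases.
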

\begin{rem}
The result also holds for $d=2$ if there exist
desired trajectories 
$ y^{   desi, j}(t) $
that are pairwise disjoint.

In  Theorem \ref{thm3}, the function $F$ has the form
\[F(t,\, y) = \sum_{j=1}^N G_0(y, \, y^{desi, \, j }(t))  \, w^j(t) \]
with $G_0$ as in (\ref{g0definition}).

\end{rem}
 
\textbf{Proof:} 
To obtain a suitable control,
we use neural interpolation
as 
constructed in Theorem \ref{thm2}.
%Section \ref{sec:neuralinterp}
%(see also \cite{doi:10.1137/23M1599744}).
Let $N$ 
desired trajectories 
\[y^{desi, j }(t), \;
 t\in [0, \, T],  \; j \in \{1,\ldots, N\}
 \]
be given that are continuously differentiable. 
Since $d\geq 3$,
we can assume without loss of generality that
there exists a number $\Delta>0$ such that  for all 
$t\in [0,\, T]$ and all $j\not=k$ we have
\[
\| y^{desi, j }(t) -  y^{desi, k }(t)\|_\infty \geq 2\, \Delta.
\]
Choose $\lambda = \frac{1}{2} $ and $\mu = 2$ and let $ \varepsilon \in (0,\, 1) $ be given. 
For all $t\in [0, \, T]$, we solve the interpolation problem
\[
F(t,  \,  y^{desi, j}(t)) =   (y^{desi, \, j})'(t) - L  \,  y^{desi, j}(t))  ,
\; j \in \{1,\ldots, N\}. 
\]
To solve this problem, for all
$t\in [0, \, T]$, we determine the vectors 
$w^k(t) \in \mathbb{R}^d$, $k \in \{1, \ldots, N\} $
as solutions of the linear interpolation problem
\[ (y^{desi, \, k})'(t) - L   y^{desi, k}(t))   =  \sum_{j=1}^N  G_0( y^{desi, \, k}(t), \,  y^{desi, j}(t)  ) \, w^k(t) , \; k \in \{1, \ldots, N\}
\]
with $w^k(t) \in \mathbb{R}^d$ for $k \in \{1,\ldots, N\}$.
For this purpose, the parameters $\gamma_1$, $h_1$, $\gamma_2$, $h_2$
are chosen as in Theorem  \ref{thm2},
that is
%2404
 $\gamma_1$ and $\gamma_2$ are chosen sufficiently large 
such that  $\kappa(\gamma_1)>0$
(with $\kappa$  as defined in (\ref{kappadefinition}))
and (\ref{gershgorinvosaussetzung}) holds
and
$h_1$ and $h_2$ are defined as in Lemma \ref{lemma2}.

The linear system for the
$d$ components of the vectors $w^1(t), \ldots, w^N(t)$
has the matrix
\[P(t) = ( G_0( y^{desi, \, k}(t), \,  y^{desi, j}(t)  ) )_{k,\, j=1}^N.
\]
By our choice of the parameters $\gamma_1$, $\gamma_2$, $h_1$, $h_2$
for all $t\in [0, \, T]$
the smallest eigenvalue of $P(t)$ is greater than or equal to $\varepsilon$.
Hence for any choice of a norm of the 
finite dimensional space that contains the matrices $W(t)$, 
there exists a constant $\tilde p>0$
that does not depend on $t$ such that  we have
%$\|P^{-1}\| \leq \tilde p \, \frac{1}{\varepsilon}$.
%This implies
\[\| W(t) \| \leq  \tilde p \, \frac{1}{\varepsilon}  \max_{j\in \{1,\ldots, N\}}  \| y^{desi, \, j}(t) \|_{ \color{red} C^1([0, \, T])}
%_\infty 
. \]
 We have  $G_0( y^{desi, \, j}(t), \,  y^{desi, j}(t)  )=1$
and also the other entries 
%of $P(t)$  depend continuously  on $t$.
%Since the values 
of $P(t)$  depend  in a continuously differentiable way on $t$.
Thus  also  the solution 
$(w^k(t))_{k=1}^N$
of the interpolation problem depends
 continuously differentiable on $t$.
 By our construction, the a priori bound
(\ref{apriori}) implies  (\ref{aprioristeuerbar}).
This finishes the proof of Theorem \ref{thm3}.

 %\textbf{The question arises: Is it also possible for $d_e=1$?}
%
% To obtain a good approximate solution, we can choose
% \[w_k(t) = y^{desi, \, k}(t).\]
%(This is a similar strategy as  in polynomial interpolation with  Lagrange basis polynomials.
%However, a fundamental difference is that in our case, 
%we have basis functions that are positive,
%similarly as in the case of Bernstein polynomials.
%Thus this procedure gives rise to a converging process
%for approximation for a continuous field of data.)

 %Then   %by  (\ref{errorbound}), 
 %the interpolation error is  of the order $\exp(- \gamma_2)$, 
 %so if we choose $\gamma>0$
 %sufficiently large,
 %the interpolation error can be made arbitrarily small.
 %In this way, we obtain an explicit representation
 %of approximate ensemble controls towards trajectories,
 %that requires neither optimization
 %nor the solution of a system of equations,
 %thus saving computing ressources.

%%%%%%%%%%%%%%%%%%%%%%%%%%%%%%%%%%%%%%%%%%%%%%%%%%%%%%%%%%%%%%%%%%%%%%%%%%%%%%%%%%%%%%%%%%

\section{Example: An ensemble with two elements} % in dimension 2}
\label{sec:exa}

In order to illustrate the construction,
we consider the case $N=2$ in detail.
For brevity, for the operator $L$ in (\ref{ode})   we  assume that $L=0$.

For $N=2$, substituting  $\lambda = \frac{1}{2}$ and $\mu = 2$ into  
(\ref{kappadefinition}) 
yields the condition that $\gamma_1$ is sufficiently large such that 
\begin{equation}
\label{kappadefinitionwdh}
\kappa(\gamma_1) = 
\frac{d+1}{d}  \frac{1}{ 1 -    \frac{d}{ \cosh(   \Delta \, \gamma_1) } }
 \left[
 1 -  4 \exp( - \Delta \, \gamma_1)
 \right]
-1 > 0
\end{equation}
and (\ref{gershgorinvosaussetzung})    requires that $\gamma_2$ is sufficiently large such that 
\begin{equation}
\label{gershgorinvosaussetzungwdh}
4 \,
%(N-1)  \, 
\exp\left( - 2 \, \kappa(\gamma_1)  \, 
 \frac{d}{d+1} \,   \left( 1 -    \frac{d}{ \cosh(    \Delta \, \gamma_1)} \right) \, \gamma_2 
\right) < 1 - \varepsilon.
\end{equation}

For the two desired trajectories,  we use the notation
\[y^{desi, \, 1}(t)
=
\begin{pmatrix}
   y^{desi, \, 1}_1(t)
\\
    y^{desi, \, 1}_2(t)
\end{pmatrix},
\;\;
y^{desi, \, 2}(t)
=
\begin{pmatrix}
   y^{desi, \, 2}_1(t)
\\
    y^{desi, \, 2}_2(t)
\end{pmatrix}
.
\]
We have the $2\times2$ matrix
\[
P(t) =
\begin{pmatrix}
1 & G_0(y^{desi, \, 1}(t),  \, y^{desi, \, 2}(t)  )
\\
G_0(y^{desi, \, 1}(t),  \, y^{desi, \, 2}(t)  ) & 1
\end{pmatrix}
\]
with the determinant
$\det P(t) =  1 - [G_0(y^{desi, \, 1}(t),  \, y^{desi, \, 2}(t)  )]^2 $.

Thus we have the inverse matrix
\[
[P(t)]^{-1} = \frac{1}{\det P(t)}
\begin{pmatrix}
1 &  -G_0(y^{desi, \, 1}(t),  \, y^{desi, \, 2}(t)  )
\\
-G_0(y^{desi, \, 1}(t),  \, y^{desi, \, 2}(t)  ) & 1
\end{pmatrix}
.
\]
For the controls
\[w^1(t)
=
\begin{pmatrix}
   w^1_1(t)
\\
   w^1_2(t)
\end{pmatrix},
\;\;
w^2(t)
=
\begin{pmatrix}
 w^2_1(t)
\\
   w^2_2(t)
\end{pmatrix}
\]
we obtain the explicit representation
\[
\begin{pmatrix}
   w^1_1(t)
\\
   w^2_1(t)
\end{pmatrix}
=
[P(t)]^{-1}
\begin{pmatrix}
  ( y^{desi, \, 1}_1)'(t)
\\
  (  y^{desi, \, 2}_1)'(t)
\end{pmatrix},
\;
\begin{pmatrix}
   w^1_2(t)
\\
   w^2_2(t)
\end{pmatrix}
=
[P(t)]^{-1}
\begin{pmatrix}
   (y^{desi, \, 1}_2)'(t)
\\
    (y^{desi, \, 2}_2)'(t)
\end{pmatrix}.
\]

Then we have 
\[F(t,\, y) = G_0(y, \, y^{desi, \, 1}(t))  \, w^1(t)  + G_0(y, \, y^{desi, \, 2}(t))  \, w^2(t).
\]

For a specific example, consider
\[y^{desi, \, 1}(t)
=
\begin{pmatrix}
   y^{desi, \, 1}_1(0) + t
\\
    y^{desi, \, 1}_2(0)
\end{pmatrix},
\;\;
y^{desi, \, 2}(t)
=
\begin{pmatrix}
   y^{desi, \, 2}_1(0) - t
\\
    y^{desi, \, 2}_2(0)
\end{pmatrix}
\]
with the given initial data 
\[
\begin{pmatrix}
   y^{desi, \, 1}_1(0) 
\\
    y^{desi, \, 1}_2(0)
\end{pmatrix}
\not =
\begin{pmatrix}
   y^{desi, \, 2}_1(0) 
\\
    y^{desi, \, 2}_2(0)
\end{pmatrix}
.
\]
We assume that for all $t\in [0, \, T]$ we have
$y^{desi, 1 }(t) \not =   y^{desi, 2 }(t)$.

%there exists a number $\Delta>0$ such that  for all 
%$t\in [0,\, T]$  we have
%\[
%\| y^{desi, 1 }(t) -  y^{desi, 2 }(t)\|_\infty \geq 2\, \Delta.\]

Then we have 
\[
\begin{pmatrix}
   w^1_1(t)
\\
   w^2_1(t)
\end{pmatrix}
=
[P(t)]^{-1}
\begin{pmatrix}
1
\\
-1
\end{pmatrix}
=
\frac{  1  + G_0(y^{desi, \, 1}(t),  \, y^{desi, \, 2}(t)  )  }{ \det P(t) }
\begin{pmatrix}
1
\\
- 1
\end{pmatrix},
\]
\[
\begin{pmatrix}
   w^1_2(t)
\\
   w^2_2(t)
\end{pmatrix}
=
[P(t)]^{-1}
\begin{pmatrix}
   0
\\
  0
\end{pmatrix}
=
\begin{pmatrix}
   0
\\
  0
\end{pmatrix}
\]
Thus we have
\[w^1(t)
=
\begin{pmatrix}
 \frac{  1  + G_0(y^{desi, \, 1}(t),  \, y^{desi, \, 2}(t)  )  }{  1 - [G_0(y^{desi, \, 1}(t),  \, y^{desi, \, 2}(t)  )]^2  }
\\
  0
\end{pmatrix},
\;\;
w^2(t)
=
\begin{pmatrix}
-  \frac{  1  + G_0(y^{desi, \, 1}(t),  \, y^{desi, \, 2}(t)  )  }{   1 - [G_0(y^{desi, \, 1}(t),  \, y^{desi, \, 2}(t)  )]^2  }
\\
  0
\end{pmatrix}
\]
and obtain
\[w^1(t)
=
\begin{pmatrix}
 \frac{ 1 }{   1  - G_0(y^{desi, \, 1}(t),  \, y^{desi, \, 2}(t)  )   }
\\
  0
\end{pmatrix},
\;\;
w^2(t)
=
\begin{pmatrix}
-  \frac{  1   }{   1  - G_0(y^{desi, \, 1}(t),  \, y^{desi, \, 2}(t)  )  }
\\
  0
\end{pmatrix}.
\]

Then  for $F = \begin{pmatrix} F_1 \\ F_2 \end{pmatrix}$ we have
$F_1(t,\, y) = 
\frac{   G_0(y, \, y^{desi, \, 1}(t))  - G_0(y, \, y^{desi, \, 2}(t))     }{   1  - G_0(y^{desi, \, 1}(t),  \, y^{desi, \, 2}(t)  )   }
$
and
$F_2(t,\, y) = 0  $.

%\[F_1(t,\, y^{desi, \, 1}(t) ) = 1, \, F_1(t,\, y^{desi, \, 1}(t) ) = -1.\]
%\frac{   G_0(y, \, y^{desi, \, 1}(t))  - G_0(y, \, y^{desi, \, 2}(t))     }{   1  - G_0(y^{desi, \, 1}(t),  \, y^{desi, \, 2}(t)  )   }
%\]

For $ N = 3$,   we have the $3\times3$ matrix
\[
P(t) =
\begin{pmatrix}
1 & G_0(y^{desi, \, 1}(t),  \, y^{desi, \, 2}(t)  ) & G_0(y^{desi, \, 1}(t),  \, y^{desi, \, 3}(t)  )
\\
G_0(y^{desi, \, 1}(t),  \, y^{desi, \, 2}(t)  ) & 1 & G_0(y^{desi, \, 2}(t),  \, y^{desi, \, 3}(t)  ) 
                                                 \\               
G_0(y^{desi, \, 1}(t),  \, y^{desi, \, 3}(t)  )  & G_0(y^{desi, \, 2}(t),  \, y^{desi, \, 3}(t)  )  & 1
\end{pmatrix}.
\]
Define the set $I=\{(1,2),(1,3),(2,3)\} $.
Then for the determinant we have 
\[\det P(t) = 1 
+ 2 \Pi_{(i,j) \in I}  G_0(y^{desi, \, i}(t),  \, y^{desi, \, j}(t)  ) 
-  \sum_{(i,j) \in I} 
|G_0(y^{desi, \, i}(t),  \, y^{desi, \, j}(t)  )|^2 
%- | G_0(y^{desi, \, 1}(t),  \, y^{desi, \, 3}(t)  )|^2 -  
%|G_0(y^{desi, \, 2}(t),  \, y^{desi, \, 3}(t)  ) |2
%+ 2   G_0(y^{desi, \, 1}(t),  \, y^{desi, \, 2}(t)  ) \,  G_0(y^{desi, \, 1}(t),  \, y^{desi, \, 3}(t)  ) \, G_0(y^{desi, \, 2}(t),  \, y^{desi, \, 3}(t)  ) 
\]
and  with the notation
\[
P(t) =
\begin{pmatrix}
1 & a  & b
\\
a & 1 & c
\\               
b  & c  & 1
\end{pmatrix}.
\]
the inverse is
\[
P(t)^{-1}=
\frac{1}{\det P(t)}
\begin{pmatrix}
1 -  c^2 &  -a + b\, c  &   - b + a \, c
\\
-a +  b\, c  &  1 - b^2 & -c +  a \, b 
 \\
-b +  a\, c  & -c +    a\, b  &  1 - a^2
\end{pmatrix}.
\]
This suggests to use as a first order approximation for  $P(t)^{-1}$
the matrix
\[M(t) =
\begin{pmatrix}
1  &  -a &   - b 
\\
-a  &  1 & -c
 \\
-b   & -c  &  1 
\end{pmatrix}.
\]
We have
\[
M(t) \, P(t) =\begin{pmatrix}
1-a^2 - b^2  &  - b c &   - c a 
\\
- b c   &  1 - a^2 - c^2  & - a b 
 \\
- a c   & - a b  &  1 - b^2 - c^2 
\end{pmatrix},
\]
which is, for small $a$, $b$, $c$, quadratically close
to the identity matrix $I$. 
With the corresponding ansatz for $M(t)$, this also generalizes to
higher dimensions.

%that has the Cholesky decomposition
%$P(t) = L(t) \,L(t)^\top $,
%with
%\[L(t) =
%\begin{pmatrix}
%1 & 0 & 0 
%\\
%G_0(y^{desi, \, 1}(t),  \, y^{desi, \, 2}(t)  ) & \sqrt{ 1 - |G_0(y^{desi, \, 1}(t),  \, y^{desi, \, 2}(t)  )  |^2} & 0
%\\
%G_0(y^{desi, \, 1}(t),  \, y^{desi, \, 3}(t)  ) & 
%\frac{  G_0(y^{desi, \, 2}(t),  \, y^{desi, \, 3}(t)  )  - G_0(y^{desi, \, 1}(t),  \, y^{desi, \, 2}(t)  )  G_0(y^{desi, \, 1}(t),  \, y^{desi, \, 3}(t)  )}{
%\sqrt{ 1 - |G_0(y^{desi, \, 1}(t),  \, y^{desi, \, 2}(t)  )  |^2} } &
%*
%\end{pmatrix}
%\]

%%%%%%%%%%%%%%%%%%%%%%%%%%%%%%%%%%%%%%%%%%%%%%%%%%%%%%%%%%%%%%%%%%%%%%%%%%%%%%%%%%%%%%%%%%
\section{Conclusion}
We have shown that for 
neural differential equations with sufficiently large neural networks, 
exact ensemble controllability is possible.
Our construction does not require optimization,
but only the solution of a linear  system of equations.
Our result allows further developments of the
mathematical theory of the optimal control of
neural ODEs, for example,
it makes it possible to show that the optimal controls
satisfy a turnpike property for large time-horizons,
see for example  \cite{faulwasser2024turnpike}, \cite{puttschneider2025towards}, \cite{gugat2024finite}, 
\cite{gugat2025optimal}.
%Our construction depends on neural networks of depth two.
Note that the present construction relies on the assumption that the desired trajectories are pairwise disjoint, which is generically satisfied for 
$d \geq 3$,  but requires additional care for  $d = 2$.
One possibility to avoid this difficulty is to add an auxiliary dimension to
augment the space artificially to dimension $3$.
%%%%%%%%%%%%%%%%%%%%%%%%%%%%%%%%%%%%%%%%%%%%%%%%%%%%%%%%%%%%%%%%%%%%%%%%%%%%%%%%%%%%%%%%%%%%%%%%%%%%%%%%%%%%%%%%%%%%%%%%%%%%%%%%%%%%%%%

Several directions for future research remain open. 
%
%First, the present construction relies on the assumption that the desired trajectories are pairwise disjoint, which is generically satisfied for 
%$d \geq 3$ 
%but requires additional care for 
%$d = 2$; a systematic treatment of the planar case would be of interest. Second, 
While we have focused on the hyperbolic tangent as the activation function, 
it would be worthwhile to extend the neural interpolation framework to other sigmoidal 
or ReLU-type activations. 
We expect that for networks of depth two, an analogous  construction of
localized functions $G_0$ is possible, but 
in the case of non-smooth activation function with less regularity.
The a priori bounds established in Theorem \ref{thm3}  suggest that the dependence of the control cost 
constant $C_N$  on $N$ and $d$ could be made explicit, 
which would be relevant for applications where the ensemble size is large. 
Finally, combining the exact controllability result with turnpike theory - as indicated above -
appears to be a promising avenue for establishing structural properties of optimal control problems for neural ODEs over 
long time horizons. For large $N$, our results could also allow to analyze the turnpike property for
a mean-field approach for problems with neural coupling terms, see \cite{gugat2024turnpike}.

%%%%%%%%%%%%%%%%%%%%%%%%%%%%%%%%%%%%%%%%%%%%%%%%%%%%%%%%%%%%%%%%%%%%%%%%%%%%%%%%%%%%%%%%%%%%%%%%%%%%%%%%%%%%%%%%%%%%%%%%%%%%%%%%%%%%%%%

%%%%%%%%%%%%%%%%%%%%%%%%%%%%%%%%%%%%%%%%%%%%%%%%%%%%%%%%%%%%%%%%%%%%%%%%%%%%%%%%%%%%%%%%%%%%%%

\textbf{Acknowledgements:} This work is supported by 
the Deutsche Forschungsgemeinschaft (DFG) 
in the
Collaborative Research Centre CRC/Transregio 154, Mathematical Modelling,
Simulation and Optimization Using the Example of Gas Networks, Project C03
and Project C05, Projektnr. 239904186 and 
the project 
\emph{Regelung inter\-agierender Partikelsysteme und ihre probabilistischen und fluid-dynamischen Be\-schreibungen}, Projektnummer: 560288187.
%%%%%%%%%%%%%%%%%%%%%%%%%%%%%%%%%%%%%%%%%%%%%%%%%%%%%%%%%%%%%%%%%%%%%%%%%%%%%%%%%%%%%%%%%%%%%%%

\bibliography{artikel}{}
\bibliographystyle{plain}
%\bibdata{artikel}

\end{document}